\documentclass[a4paper,11pt]{article}
\textwidth 6.2in \textheight 8.5in \oddsidemargin -0.01truein
\topmargin -.2truein
\usepackage{mathrsfs}
\usepackage{latexsym}
\usepackage{amsmath,amssymb}
\usepackage{amsthm}
\usepackage{amsfonts}
\usepackage[usenames]{color}
\usepackage{amssymb}
\usepackage{graphicx}
\usepackage{amsmath}
\usepackage{amsfonts}
\usepackage{amsthm}
\usepackage{mathrsfs}
\usepackage{dsfont}
\usepackage{indentfirst}
\usepackage{enumerate}
\usepackage[square, comma, sort&compress, numbers]{natbib}

\ExecuteOptions{dvips} \marginparwidth 0pt \oddsidemargin 0.5 truecm
\evensidemargin 0.5 truecm \marginparsep 0pt \topmargin -25pt
\textheight 22 truecm \textwidth 15.0 truecm

\newtheoremstyle{mythm}{1.5ex plus 1ex minus .2ex}{1.5ex plus 1ex
minus .2ex}{\kai}{\parindent}{\song\bfseries}{}{1em}{}
\numberwithin{equation}{section}

\newtheorem{thm}{Theorem}[section]
\newtheorem{lemma}{Lemma}[section]

\newtheorem{corollary}{Corollary}[section]
\allowdisplaybreaks[4]
\newcommand\leqs{\leqslant}
\newcommand\geqs{\geqslant}

\makeatletter
\newcommand\Rmnum[1]{\expandafter\@slowromancap\romannumeral #1@}
\makeatother

\title{On the Hessian Hardy-Sobolev Inequality and Related Variational Problems}
\author{Rongxun He and Wei Ke}

\begin{document}
\date{}
\maketitle

\begin{abstract}
In this paper, we first prove the Hardy-Sobolev inequality for the Hessian integral by means of a descent gradient flow of certain Hessian functionals. As an application, we study the existence and regularity results of solutions to related variational problems. Our results extend the variational theory of the Hessian equation in \cite{CW01variational}.
\end{abstract}

\section{Introduction}
For a smooth function $u$, let $\lambda(D^2u)=(\lambda_1,\ldots,\lambda_n)$ be the eigenvalues of the Hessian matrix $D^2u$. Define the $k$-Hessian operator $S_k$($1\leqs k\leqs n$)
\begin{align*}
S_k(D^2u):=\sigma_k(\lambda(D^2u))=\sum_{i_1<\cdots<i_k}\lambda_{i_1}\cdots\lambda_{i_k}.
\end{align*}
Here, $\sigma_k(\lambda)$ denotes the $k$-th elementary symmetric polynomial of $\lambda$. Alternatively, $S_k(D^2u)$ equals the sum of the principal minors of order $k$ for $D^2u$. According to \cite{CNS85Dirichlet}, we call a function $u\in C^2$ to be $k$-admissible, if $\lambda(D^2u)$ belongs to the symmetric G\aa rding cone $\Gamma_k$, which is given by
\begin{align*}
\Gamma_k=\{\lambda\in\mathbb{R}^n:\sigma_j(\lambda)>0,\ j=1,\ldots,k\}.
\end{align*}
Given a bounded domain $\Omega\subset\mathbb{R}^n$, we denote by $\Phi^k(\Omega)$ the set of all $k$-admissible functions defining on $\Omega$ and by $\Phi_0^k(\Omega)$ the set of all $k$-admissible functions vanishing on the boundary $\partial\Omega$. We call a bounded domain $\Omega$ of class $C^2$ to be strictly $(k-1)$-convex, if there exists a positive constant $K$ such that for every $x\in\partial\Omega$,
\begin{align*}
(\kappa_1(x),\ldots,\kappa_{n-1}(x),K)\in\Gamma_k,
\end{align*}
where $\kappa_1(x),\cdots,\kappa_{n-1}(x)$ denote the principal curvatures of $\partial\Omega$ at $x$. In this paper, we always assume that $\Omega$ is strictly $(k-1)$-convex.

In \cite{Wang94class}, Wang studied the functional $I_k(u)$ given by
\begin{align*}
I_k(u):=\int_\Omega(-u)S_k(D^2u)dx,
\end{align*}
and verified that $\Vert u\Vert_{\Phi_0^k(\Omega)}=[I_k(u)]^{1/(k+1)}$ is a norm in $\Phi_0^k(\Omega)$. Additionally, Wang derived the following Sobolev-type inequality:
\begin{align}
\Vert u\Vert_{L^p(\Omega)}\leqs C\Vert u\Vert_{\Phi_0^k(\Omega)} \quad\text{holds for all }u\in\Phi_0^k(\Omega),
\label{formu11}
\end{align}
where $p\in[1,k^{\star}]$. Here, $k^{\star}$ is the critical exponent for $k$-Hessian operator,
\begin{align*}
k^{\star}\left\{
\begin{array}{ll}
 =\frac{n(k+1)}{n-2k}  & \text{if }2k<n, \\
 <\infty    & \text{if }2k=n, \\
 =\infty    & \text{if }2k>n.
\end{array}
\right.
\end{align*}
Moreover, Tian and Wang proved a Moser-Trudinger type inequality for the case $2k=n$ in \cite{TW10MoserTrudinger}, that is
\begin{align}
\sup\left\{\int_\Omega\exp\bigg[\alpha_n\Big(\frac{|u|}{\Vert u\Vert_{\Phi_0^k(\Omega)}}\Big)^{(n+2)/n}\bigg]dx:u\in\Phi_0^k(\Omega)\right\}\leqs C,
\label{formu12}
\end{align}
where $\alpha_n=n\big[\frac{\omega_n}{k}\binom{n-1}{k-1}\big]^{2/n}$ and $\omega_n$ denotes the area of the unit sphere in $\mathbb{R}^n$.

In this paper, we will utilize the idea of \cite{Wang94class} and obtain a Hardy-Sobolev-type inequality related to the Hessian integral $I_k(u)$. Before that, we first denote a weighted $L^p$-norm:
\begin{align*}
\Vert u\Vert_{L^p(\Omega;|x|^{\sigma})} =\left(\int_\Omega|x|^{\sigma}|u|^pdx\right)^{1/p}.
\end{align*}
Then given any $u\in C_0^\infty(\mathbb{R}^n)$, the classical Hardy-Sobolev inequality is stated as
\begin{align*}
\Vert u\Vert_{L^p(\mathbb{R}^n;|x|^{\sigma})}\leqs C_q\Vert Du\Vert_{L^q(\mathbb{R}^n)},
\end{align*}
where $1\leqs q<n$, $-q\leqs\sigma\leqs0$ and $p=q(n+\sigma)/(n-q)$.
Note that the previous inequality includes the Sobolev inequality as $\sigma=0$ and the Hardy's inequality as $\sigma=-q$. The first main result of our article is as follows.

\begin{thm}[Hessian Hardy-Sobolev inequality]\label{thm11}
Let $\Omega\subset\mathbb{R}^n$ be any smooth $(k-1)$-convex domain containing the origin. Suppose that $n>2k$, $-1\leqs s\leqs0$ and $k^*=k^*(s)>0$ be such that
\begin{align}
k^*=\frac{(k+1)(n+2sk)}{n-2k}.
\label{formu13}
\end{align}
Then it holds for all $u\in\Phi_0^k(\Omega)$,
\begin{align}
\Vert u\Vert_{L^{k^*}(\Omega;|x|^{2sk})}\leqs C\Vert u\Vert_{\Phi_0^k(\Omega)},
\label{formu14}
\end{align}
where the constant $C$ depends only on $n,k$ and $s$. In particular, if $-1<s\leqs0$, the best constant can be attained when $\Omega=\mathbb{R}^n$ by the function
\begin{align}
u(x)=-(\lambda+|x|^{2(s+1)})^{(2k-n)/2k(s+1)}
\label{formu15}
\end{align}
with some positive constant $\lambda>0$.
\end{thm}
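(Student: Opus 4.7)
The approach is to adapt Wang's variational method from \cite{Wang94class} to the weighted setting, recasting (\ref{formu14}) as the statement that the infimum
\[
\Lambda := \inf\Bigl\{I_k(u) : u \in \Phi_0^k(\Omega),\ \int_\Omega |x|^{2sk}(-u)^{k^*}\,dx = 1\Bigr\}
\]
is strictly positive. The exponent $k^*$ in (\ref{formu13}) is precisely the one for which the Rayleigh quotient $I_k(u)/\|u\|_{L^{k^*}(\Omega;|x|^{2sk})}^{k+1}$ is invariant under the dilation $u(x) \mapsto u(\rho x)$, interpolating between Wang's Sobolev case ($s=0$) and a weighted Hardy-type inequality ($s=-1$, where $k^*=k+1$). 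The Euler--Lagrange equation satisfied by any minimizer is the weighted Hessian equation $S_k(D^2 u) = c\,|x|^{2sk}(-u)^{k^*-1}$ with $u|_{\partial\Omega}=0$, where $c$ is proportional to $\Lambda$.

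To prove $\Lambda > 0$ and to produce a minimizer, I would construct a descent gradient flow of the form
\[
\partial_t u = S_k(D^2 u) - \mu(t)\,|x|^{2sk}(-u)^{k^*-1},
\]
with $\mu(t)$ a time-dependent Lagrange multiplier enforcing the constraint. Three properties must be verified: (i) the $k$-admissibility of $u(t)$ is preserved, by a maximum-principle argument for the corresponding parabolic fully nonlinear equation carried out after approximating the weight by $(|x|^2+\varepsilon^2)^{sk}$ and passing $\varepsilon\to 0$; (ii) $I_k(u(t))$ is monotone nonincreasing, which follows by differentiating under the flow and using the variational identity $\delta I_k/\delta u = -(k+1) S_k(D^2 u)$; (iii) $\mu(t)$ remains bounded, which together with (i)--(ii) yields compactness along a subsequence via a subcritical estimate derived from H\"older's inequality combined with Wang's Sobolev embedding (\ref{formu11}), producing a limit that solves the Euler--Lagrange equation and realizes $\Lambda$.

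To identify the explicit extremal (\ref{formu15}) when $\Omega = \mathbb{R}^n$ and $-1 < s \le 0$, I would substitute the radial ansatz $u = -(\lambda+|x|^{2(s+1)})^{(2k-n)/(2k(s+1))}$ into the Euler--Lagrange equation via the formula $S_k(D^2 u) = \binom{n-1}{k-1}(u'/r)^{k-1}u'' + \binom{n-1}{k}(u'/r)^k$ and match powers of $\lambda+|x|^{2(s+1)}$; this simultaneously confirms the exponent (\ref{formu13}) and the admissibility of $u$. Sharpness follows by Schwarz symmetrization, using that the weight $|x|^{2sk}$ is radially decreasing while $I_k$ decreases under symmetrization on the class of $k$-admissible functions. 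The main obstacle I anticipate is property (i) above: preserving $k$-admissibility under the flow in the presence of the singular weight $|x|^{2sk}$ near the origin, which will require careful regularization together with uniform-in-$\varepsilon$ estimates exploiting the divergence structure of $S_k$. A secondary difficulty, present in the critical case, is ruling out concentration of minimizing sequences at the origin; here a weighted Pohozaev-type identity, in the spirit of \cite{TW10MoserTrudinger}, should provide the needed rigidity to show that $\Lambda$ is attained rather than merely finite.
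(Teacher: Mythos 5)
Your plan is structured around directly minimizing the Rayleigh quotient on the constraint manifold $\{\int_\Omega |x|^{2sk}(-u)^{k^*}dx = 1\}$ via a constrained gradient flow, whereas the paper's proof never attempts a direct minimization and instead works by contradiction. The crucial structural idea in the paper — which is absent from your proposal — is the reduction of the inequality to \emph{radial} admissible functions. Specifically, the paper compares the two infima $T_s$ (over all of $\Phi_0^k(B_R)$) and $T_{s,r}$ (over radial functions in $\Phi_0^k(B_R)$); Lemma~\ref{lem31} gives $T_{s,r}\geq c_0>0$ by converting the radial problem into a one-dimensional Caffarelli--Kohn--Nirenberg inequality, and the core of Step~1 is a contradiction argument showing $T_s = T_{s,r}$. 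That argument uses a \emph{regularized, capped} functional $J$ (with weight $(|x|^2+\delta^2)^{sk}$ and a bounded approximation of $|u|^{k^*-1}$) precisely so that the flow has uniform a priori bounds, takes a subsequential limit solving a uniformly elliptic equation, and then applies the \textbf{Alexandrov moving plane method} to deduce the limit is radial. This moving-plane step is what closes the argument; nothing in your outline plays an analogous role.

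Without that reduction, the direct approach you propose runs into the concentration problem head-on: the exponent $k^*$ in \eqref{formu13} is exactly the one that makes the quotient dilation-invariant, so the problem is genuinely critical at the origin and compactness of a minimizing sequence cannot be obtained by the H\"older-plus-subcritical-embedding device you describe in item~(iii). Your acknowledged remedy — a weighted Pohozaev identity — is not what resolves this in the paper, and it is not clear it would: Pohozaev identities typically give non-existence or rigidity for the Euler--Lagrange equation, not compactness for minimizing sequences. In addition, your flow $\partial_t u = S_k(D^2u) - \mu(t)|x|^{2sk}(-u)^{k^*-1}$ omits the logarithmic/concave reparametrization $\mu(S_k)$ that the paper relies on (Lemma~\ref{lem21}, condition~\eqref{formu22}) to guarantee that $S_k(D^2u)>0$ is preserved and that the parabolic theory applies; establishing preservation of $k$-admissibility for the raw flow is not routine.

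The most concrete error is in your extremal argument: you appeal to Schwarz symmetrization with the statement that ``$I_k$ decreases under symmetrization on the class of $k$-admissible functions.'' The paper explicitly flags that this is false in the needed generality — Trudinger's symmetrization result for $I_k$ applies only to a restricted subclass ($u\in A_{k-1}(\Omega)$, with $(k-1)$-convex starshaped sub-level sets), and avoiding that restriction is precisely the point of the paper. Moreover, plugging the ansatz \eqref{formu15} into the Euler--Lagrange equation only verifies it is a critical point, not that it achieves the best constant. The paper instead uses Step~1/Step~2 to reduce the best-constant problem to radial functions and then invokes \textbf{Bliss' lemma} (via the change of variable $t=r^{(2k-n)/k}$) to identify the radial extremal. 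You would need to replace both the symmetrization claim and the verification step with an argument of comparable strength — likely the same radial reduction plus a one-dimensional sharp inequality.

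A further missing ingredient: the paper's Step~2 establishes domain monotonicity $T_s(\Omega_1)\geq T_s(\Omega_2)$ for $\Omega_1\subset\Omega_2$ via a separate concavity-of-functional argument, which is what extends Step~1 from balls to general $(k-1)$-convex domains. Your proposal treats general $\Omega$ from the start, but does not supply an argument of this kind; and the moving-plane step, if you were to add it, requires a ball.
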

Theorem \ref{thm11} is an extension of Hessian Sobolev inequality in \cite{Wang94class, Trudinger97symmetrization}. Using the Alexandrov-Fenchel isoperimetric inequality in \cite{GL09starshape} and the symmetrization results in \cite{Trudinger97symmetrization}, Theorem \ref{thm11} was partially proven in \cite{Gavitone08thesis, JCSW20hardy}, given that $u$ belongs to a specific function space $A_{k-1}(\Omega)$, and its sub-level set $\{x\in\Omega:u(x)<t\}$ is always $(k-1)$-convex starshaped. However, our result applies to all $k$-admissible functions in $\Phi_0^k(\Omega)$, and the technique is totally different. More precisely, our proof reduces the desired inequality to radially symmetric functions by means of a descent gradient flow, as in \cite{Wang94class}.

For an application of the Hessian Hardy-Sobolev inequality, we then turn our attention to its related variational problems, thanks to the variational structure of $S_k$ (see \cite{Wangnote}). To start with, we look at the semilinear case ($k=1$)
\begin{equation}
\left\{
\begin{array}{ll}
 -\Delta u=|x|^{2s}f(x,u)  & \text{in }\Omega, \\
 u=0  & \text{on }\partial\Omega,
\end{array}
\right.
\label{formu16}
\end{equation}
where $0\in\Omega\subset\mathbb{R}^n$ is bounded and $s>-1$. For a special case $f(x,u)=|u|^p$, the equation \eqref{formu16} is called the Hardy-H\'enon equation and has been extensively studied in the past decades, see \cite{BW06henon, GS81positive, PS12hardyhenon, Serra05henon, SWS02henon}. The existence of least-energy solutions to \eqref{formu16} follows directly by Hardy-Sobolev compact embedding via the standard variational method, when $1<p<2^*-1$. Here, the exponent $2^*$ is given by $2^*=\min(\frac{2n}{n-2},\frac{2(n+2s)}{n-2})$ for $s>-1$.

When it turns to $k\geqs2$, the situation is more complicated, since the regularity theory for $k$-Hessian equation is not that easy. In \cite{CW01variational}, Chou and Wang developed a variational theory for the Hessian equation for the first time, by studying the critical point of certain Hessian functionals. They also established appropriate uniform estimates, gradient estimates, and in particular the interior second derivatives estimates. Inspired by \cite{CW01variational}, we will study the following Dirichlet problem
\begin{equation}
\left\{
\begin{array}{ll}
 S_k(D^2u)=|x|^{2sk}f(x,u)  & \text{in }\Omega, \\
 u=0  & \text{on }\partial\Omega,
\end{array}
\right.
\label{formu17}
\end{equation}
where $0\in\Omega\subset\mathbb{R}^n$ is a bounded strictly $(k-1)$-convex domain. As the semilinear case \eqref{formu16}, we will deal with a general situation where $1\leqs k\leqs n$ and $s>-s_0$ for $s_0=\min(1,n/2k)$. Note that the equation is singular at the origin if $s<0$ while degenerate if $s>0$. We will utilize the Hessian Hardy-Sobolev inequality \eqref{formu14} when $2k<n$ and $-1<s\leqs0$, while for the other cases we use the Hessian Sobolev inequality \eqref{formu11} instead. Therefore, we extend the definition of the critical exponent $k^*$ by
\begin{align}
k^*=k^*(s)\left\{
\begin{array}{ll}
 =\frac{(k+1)(n+2sk)}{n-2k} &\text{if }2k<n\text{ and }s\leqs0,\\
 =\frac{(k+1)n}{n-2k} & \text{if }2k<n\text{ and }s>0, \\
 <\infty    & \text{if }2k=n, \\
 =\infty    & \text{if }2k>n.
\end{array}
\right.
\label{formu18}
\end{align}
In the following, we state our main results for the variational problem \eqref{formu17}. We always assume $0\in\Omega$ to be a strictly $(k-1)$-convex bounded domain with the boundary $\partial\Omega\in C^{3,1}$.
\begin{thm}\label{thm12}
Let $s>-s_0$ for $s_0=\min(1,n/2k)$ and $f^{1/k}\in C^{1,1}(\overline{\Omega}\times\mathbb{R})$. Suppose that $f(x,z)>0$ for $z<0$ and satisfies
\begin{gather}
\lim_{z\to0^-}f(x,z)/|z|^k<\lambda_1, \label{formu19}\\
\lim_{z\to-\infty}f(x,z)/|z|^k>\lambda_1, \label{formu110}
\end{gather}
and
\begin{equation}
\left\{
\begin{array}{ll}
 \lim_{z\to-\infty}f(x,z)/|z|^{k^*-1}=0  & \text{if }2k<n, \\
 \lim_{z\to-\infty}\log{f(x,z)}/|z|^{(n+2)/n}=0  & \text{if }2k=n,
\end{array}
\right.
\label{formu111}
\end{equation}
uniformly in $\overline{\Omega}$, where $\lambda_1$ is the eigenvalue of the problem \eqref{formu116}. Suppose also that there exist $\theta>0$ and $M$ large such that
\begin{align}
\int_z^0f(x,\tau)d\tau\leqs\frac{1-\theta}{k+1}|z|f(x,z)\quad\text{for } z<-M.
\label{formu112}
\end{align}
Then the problem \eqref{formu17} admits a nontrivial admissible solution $u\in\Upsilon(\Omega)$. Here, the function space $\Upsilon(\Omega)$ is given by
\begin{equation}
\left\{
\begin{array}{ll}
C^{3,\alpha}(\Omega\setminus\{0\})\cap C^{1,1}(\overline\Omega) &\text{if}\quad s\in(0,\infty),\\
C^{3,\alpha}(\Omega\setminus\{0\})\cap C^{1,1}(\overline\Omega\setminus\{0\})\cap C^{\alpha}(\overline\Omega) &\text{if}\quad s\in(-s_0,0),
\end{array}
\right.
\label{formu113}
\end{equation}
with some constant $\alpha\in(0,1)$.
\end{thm}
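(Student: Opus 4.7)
The plan is to obtain Theorem \ref{thm12} by producing a nontrivial critical point of the Hessian energy
\[
J(u)=\frac{1}{k+1}I_k(u)-\int_\Omega |x|^{2sk}F(x,u)\,dx,\qquad F(x,z)=\int_z^0 f(x,\tau)\,d\tau,
\]
on the cone $\Phi_0^k(\Omega)$, using the mountain pass/descent gradient flow variational scheme developed in \cite{CW01variational}, and then upgrading this weak admissible solution to the regularity class $\Upsilon(\Omega)$ via a priori Hessian estimates and a local barrier argument at the origin. A formal Euler-Lagrange computation, using the well-known identity $\delta I_k(u)[\eta]=-(k+1)\int_\Omega \eta\,S_k(D^2u)\,dx$, shows that critical points of $J$ satisfy $S_k(D^2u)=|x|^{2sk}f(x,u)$ together with $u|_{\partial\Omega}=0$.

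For the existence step I would first verify the mountain pass geometry of $J$ on $\Phi_0^k(\Omega)$. Condition \eqref{formu19}, combined with the first-eigenvalue inequality $I_k(u)\geqs\lambda_1\int_\Omega|x|^{2sk}|u|^{k+1}\,dx$ coming from \eqref{formu116} and the subcritical growth in \eqref{formu111}, provides a bound of the form $J(u)\geqs c\,\|u\|_{\Phi_0^k(\Omega)}^{k+1}-C\,\|u\|_{\Phi_0^k(\Omega)}^{p+1}$ with $p>k$, hence $J\geqs c\,\rho^{k+1}>0$ on every small sphere $\{\|u\|_{\Phi_0^k(\Omega)}=\rho\}$; testing along $t\varphi_1$ with $\varphi_1$ the first eigenfunction and invoking \eqref{formu110} makes $J(t\varphi_1)\to-\infty$ as $t\to+\infty$. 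The Ambrosetti-Rabinowitz condition \eqref{formu112} then forces Palais-Smale type sequences to remain bounded, and strong compactness of bounded PS sequences comes from \eqref{formu111} together with the Hessian Hardy-Sobolev inequality \eqref{formu14} in the range $2k<n,\ -1<s\leqs0$, the Hessian Sobolev inequality \eqref{formu11} in the cases $2k<n,\ s>0$ or $2k>n$, and the Moser-Trudinger inequality \eqref{formu12} when $2k=n$. Because $\Phi_0^k(\Omega)$ is only a convex cone, the classical deformation lemma is unavailable; in its place, following \cite{CW01variational}, I would use a descent gradient flow obtained from an auxiliary parabolic $k$-Hessian initial-boundary value problem that preserves $k$-admissibility, together with a minimax over admissible paths, producing a nontrivial weak admissible critical point of $J$.

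Regularity is then handled according to the sign of $s$. Away from the origin the weight $|x|^{2sk}$ is smooth and bounded away from zero, so the assumption $f^{1/k}\in C^{1,1}(\overline\Omega\times\mathbb{R})$ combined with the $C^{3,1}$, $(k-1)$-convex boundary $\partial\Omega$, Caffarelli's interior $C^{1,1}$ and $C^{3,\alpha}$ estimates, and the boundary estimates of Chou-Wang yield $u\in C^{3,\alpha}(\Omega\setminus\{0\})$. For $s>0$ the weight vanishes at $0$ and the interior estimates extend uniformly across the origin, giving $u\in C^{1,1}(\overline\Omega)$. For $-s_0<s<0$ the weight is singular at $0$, and the best regularity up to the origin is H\"older continuity, obtained by comparison with radial barriers modelled on \eqref{formu15}, while $C^{1,1}$ holds only on $\overline\Omega\setminus\{0\}$; this produces the dichotomy recorded in \eqref{formu113}.

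The hardest ingredient is the existence of a critical point inside $\Phi_0^k(\Omega)$: the descent gradient flow must be shown to remain strictly $k$-admissible along the whole minimax path, which relies on the delicate interior second derivative estimate for parabolic Hessian equations and on ruling out escape to the boundary of the cone by the compactness supplied by \eqref{formu14}, \eqref{formu11} or \eqref{formu12}. A secondary difficulty is that the weight $|x|^{2sk}$ is neither bounded nor bounded away from zero when $s\ne0$, so both the PS compactness argument and the near-origin regularity require weighted estimates tailored to the scaling of the radial extremals from Theorem \ref{thm11}, rather than the uniform ellipticity arguments available in \cite{CW01variational}.
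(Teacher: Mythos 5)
Your high-level plan coincides with the paper's: cast \eqref{formu17} as the Euler--Lagrange equation of $J$ on the cone $\Phi_0^k(\Omega)$, verify mountain-pass geometry using \eqref{formu19}, \eqref{formu110}, \eqref{formu117} and \eqref{formu111}, replace the classical deformation lemma (unavailable on a cone) by the descent gradient flow of Chou--Wang \cite{CW01variational, Tso90functional}, use the Ambrosetti--Rabinowitz condition \eqref{formu112} to control Palais--Smale-type sequences, and invoke compactness from \eqref{formu14}, \eqref{formu11} or \eqref{formu12}. This matches the skeleton of Section 5 of the paper. However, there are several concrete gaps that the paper addresses and your sketch does not.

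First, the regularization of the weight is essential, not optional. The weight $|x|^{2sk}$ fails to be $C^{1,1}$ at the origin for $s\neq0$, so the parabolic existence and estimates of Lemma~\ref{lem21} (which require smooth data) do not apply to the flow directly. The paper replaces $|x|^{2sk}$ by $(|x|^2+\delta^2)^{sk}$ and $f$ by $f_\delta$ with $f_\delta^{1/k}=f^{1/k}+\delta$, solves the approximate problem \eqref{formu52}, and only afterwards passes $\delta\to0$ using the uniform estimates of Theorem~\ref{thm14}. You acknowledge that the weight is neither bounded nor bounded away from zero, but you do not propose a concrete desingularization, and without it the flow step does not get off the ground. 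Second, the paper first imposes an extra polynomial growth hypothesis \eqref{formu51} with a carefully chosen $k<p<k^*-1$, proves the theorem under that hypothesis, and only in Step~6 removes it by truncating $f$ to $f_m$ and checking that \eqref{formu112} and the subcritical decay survive the truncation; this two-stage approximation is absent from your plan. Third, and most substantially, the uniform $L^\infty$ bound for the flow and for the limiting solutions $u_\delta$ is the hard analytic core (Steps~3--5 of the paper): it combines the structure \eqref{formu23} of $\mu$, the gradient and time-derivative estimates of Lemma~\ref{lem22}, a measure-theoretic argument on the exceptional set $K^0$, a nontrivial energy estimate \eqref{formu59}--\eqref{formu510}, and, when $2k<n$, a blow-up/rescaling argument at a putative blow-up sequence with $R_\delta=M_\delta^{\beta_0}$ and an $L^\infty$-estimate applied on the superlevel set $\widetilde D_\delta$. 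The phrase ``PS sequences remain bounded'' does not yet yield this: boundedness of the Hessian energy does not give an $L^\infty$ bound in the $k$-Hessian setting, and the specific choice of $p$, $q$ and the rescaling exponent are what make the contradiction work. Finally, your regularity paragraph invokes classical interior estimates and ad hoc radial barriers near $0$; the paper instead relies on the tailored weighted-regularity Theorem~\ref{thm14} from \cite{HH25weighted}, precisely because the standard $C^{1,1}$/$C^{3,\alpha}$ theory fails across the singularity of the weight. Your barrier idea is plausible in spirit but is not a substitute for those quantitative estimates and is not what the proof actually uses.
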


\begin{thm}\label{thm13}
Let $s>-s_0$ for $s_0=\min(1,n/2k)$ and $f^{1/k}\in C^{1,1}(\overline{\Omega}\times\mathbb{R})$. Suppose that $f(x,z)>0$ for $z<0$ and satisfies
\begin{gather}
\lim_{z\to0^-}f(x,z)/|z|^k>\lambda_1, \label{formu114}\\
\lim_{z\to-\infty}f(x,z)/|z|^k<\lambda_1, \label{formu115}
\end{gather}
uniformly in $\overline{\Omega}$, where $\lambda_1$ is the eigenvalue of the problem \eqref{formu116}. Then the problem \eqref{formu17} admits a nontrivial admissible solution $u\in\Upsilon(\Omega)$, where $\Upsilon(\Omega)$ is given by \eqref{formu113}.
\end{thm}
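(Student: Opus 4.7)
My plan is to prove Theorem~\ref{thm13} by direct minimization of the Chou--Wang type energy functional
\begin{align*}
J_k(u):=\frac{1}{k+1}I_k(u)-\int_\Omega F(x,u)\,dx,\qquad F(x,z):=\int_z^0|x|^{2sk}f(x,\tau)\,d\tau,
\end{align*}
on the cone $\Phi_0^k(\Omega)$, whose critical points solve \eqref{formu17}. The hypotheses put us in the sublinear-at-infinity regime: \eqref{formu114} will make $J_k$ take negative values along small multiples of the first $k$-Hessian eigenfunction, while \eqref{formu115} will force coercivity. One therefore expects $\inf J_k<0$ to be attained at a nontrivial admissible function.

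For coercivity, I would choose $\lambda_1>\lambda'>\limsup_{z\to-\infty}f(x,z)/|z|^k$ uniformly in $\overline\Omega$ and $C>0$ so that
\begin{align*}
F(x,z)\leqs\frac{\lambda'}{k+1}|x|^{2sk}|z|^{k+1}+C|x|^{2sk}\quad\text{for all }z<0;
\end{align*}
since $s>-s_0$ ensures $2sk>-n$, the weight $|x|^{2sk}$ is integrable on $\Omega$. Combining this with the variational characterization
\begin{align*}
\lambda_1=\inf_{0\ne u\in\Phi_0^k(\Omega)}\frac{I_k(u)}{\int_\Omega|x|^{2sk}|u|^{k+1}\,dx}
\end{align*}
associated to \eqref{formu116} yields $J_k(u)\geqs\frac{1}{k+1}(1-\lambda'/\lambda_1)I_k(u)-C'$, so $J_k$ is bounded below and coercive. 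Conversely, \eqref{formu114} furnishes $\delta>0$ with $F(x,-t\phi_1)\geqs\frac{\lambda_1+\delta}{k+1}|x|^{2sk}(t\phi_1)^{k+1}$ for small $t>0$, where $\phi_1$ is the first eigenfunction and so satisfies $I_k(\phi_1)=\lambda_1\int|x|^{2sk}\phi_1^{k+1}\,dx$; plugging in gives $J_k(-t\phi_1)<0$.

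To extract an actual minimizer I would run a Chou--Wang-type descent gradient flow for $J_k$ that preserves $k$-admissibility, started at $u_0=-t\phi_1$, in the same spirit as the flow used in proving Theorem~\ref{thm11}. Coercivity yields a uniform $\|\cdot\|_{\Phi_0^k(\Omega)}$ bound along the flow, and the compact embedding into the relevant weighted $L^{k+1}$-space---given by \eqref{formu11} when $s>0$ and by Theorem~\ref{thm11} when $-1<s\leqs0$---ensures that $u\mapsto\int F(x,u)\,dx$ is continuous along the flow. One then obtains a limit $u_\infty\in\Phi_0^k(\Omega)$ with $J_k(u_\infty)\leqs J_k(u_0)<0$, hence nontrivial, solving \eqref{formu17} weakly. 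Regularity in $\Upsilon(\Omega)$ would follow by the Chou--Wang program adapted to the weight $|x|^{2sk}$: for $s>0$ the weight is bounded and one recovers $C^{3,\alpha}(\Omega\setminus\{0\})\cap C^{1,1}(\overline\Omega)$ as in \cite{CW01variational}; for $-s_0<s<0$ localization away from $0$ yields $C^{3,\alpha}(\Omega\setminus\{0\})\cap C^{1,1}(\overline\Omega\setminus\{0\})$, while near the singular point a quantitative Aleksandrov comparison together with a boundary Hölder estimate upgrades this to $C^\alpha(\overline\Omega)$.

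The main obstacles I foresee are (i) designing the descent flow so that it stays inside the cone $\Phi_0^k(\Omega)$ and genuinely converges, rather than merely producing a minimizing sequence that might concentrate or degenerate---here the eigenvalue gap encoded in \eqref{formu115} is essential---and (ii) proving Hölder continuity of $u_\infty$ across $x=0$ when $s<0$, where the standard Chou--Wang second-derivative estimates are not available. I expect (ii) to be handled via an approximation scheme replacing $|x|^{2sk}$ by $(|x|^2+\varepsilon^2)^{sk}$ and passing to the limit with uniform estimates inherited from the non-degenerate case.
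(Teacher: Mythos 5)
Your overall strategy---minimize the Chou--Wang functional $J$ via a descent gradient flow after regularizing the singular weight---is indeed the route the paper takes in Section~4. The coercivity step via \eqref{formu115} and the spectral characterization \eqref{formu117}, and the test function $a\varphi_1$ forced below zero via \eqref{formu114}, match the paper's \eqref{formu44} and the computation of $J_m(a\varphi_1)$.

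However, there is a genuine gap: you never obtain a uniform $L^\infty$ bound. Coercivity gives a uniform bound only on the Hessian energy $I_k(u)$, hence (via the Hessian Sobolev/Hardy--Sobolev embedding) on $\Vert u\Vert_{L^{k^*}}$; for $k<n/2$ this does \emph{not} control $\Vert u\Vert_{L^\infty}$. Yet the a priori regularity in Theorem~\ref{thm14} has constants depending explicitly on $\Vert u\Vert_{L^\infty(\Omega)}$, so without an $L^\infty$ bound you cannot compactify the approximating solutions $u_m$ and pass to the limit as the regularization parameter $\to 0$. The paper supplies exactly this missing ingredient as Lemma~\ref{lem41}: an a priori $L^\infty$ estimate for solutions of the regularized problem \eqref{formu41}, proved by contradiction --- normalize $v_m=u_m/M_m$ with $M_m=\Vert u_m\Vert_{L^\infty}\to\infty$, show $v_m$ subconverges (using Theorem~\ref{thm14}) to a nontrivial supersolution of $S_k(D^2u)=(\lambda_1-\theta)|x|^{2sk}|u|^k$, then combine with the eigenfunction subsolution $a\varphi_1$ via the sub/supersolution method to produce a nontrivial admissible solution of that equation, contradicting the uniqueness of the first eigenvalue from \cite{HH25weighted}. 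This blow-up argument, hinged on eigenvalue uniqueness, is the crux of the sublinear case and is absent from your proposal. A second, more technical omission: you describe the flow as being run directly on $J_k$, but since $|x|^{2sk}$ is not $C^{1,1}$ at the origin and $f$ need not be uniformly positive, the parabolic Lemma~\ref{lem21} (which requires $g=\log\psi$ and its second derivatives to be bounded) does not apply as written; one must simultaneously regularize the weight to $(|x|^2+m^{-2})^{sk}$ \emph{and} $f$ to $f_m$ with $f_m^{1/k}=\hat f_m^{1/k}+1/m$ (truncating at large $|z|$), solve the approximate elliptic problems \eqref{formu41}, and only then pass to the limit --- your proposal treats this double approximation as an afterthought for regularity near $x=0$ rather than as a prerequisite for the flow to exist globally with uniform-in-$t$ estimates.
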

Note that when $s<0$, $u\in\Upsilon(\Omega)$ is viewed as a viscosity solution as well as a weak solution of the Dirichlet problem \eqref{formu17}; see \cite{TW97measure} and \cite{Urbas90viscosity}. Moreover, $\lambda_1$ is the (first) eigenvalue of Hessian operator $S_k$ with weights $|x|^{2sk}$. Actually, it was proved by a recent work \cite{HH25weighted} that for $1\leqs k\leqs n$ and $s>-s_0$, there exists a unique positive constant $\lambda_1=\lambda_1(n,k,s,\Omega)$ such that the eigenvalue problem
\begin{equation}
\left\{
\begin{array}{ll}
 S_k(D^2u)=\lambda|x|^{2sk}|u|^k  & \text{in }\Omega,  \\
 u=0  & \text{on }\partial\Omega,
\end{array}
\right.
\label{formu116}
\end{equation}
has a negative admissible solution $\varphi_1\in\Upsilon(\Omega)$, which is unique up to scalar multiplication. Furthermore, $\lambda_1$ satisfies the spectral feature
\begin{align}
\lambda_1=\inf_{u\in\Phi_0^k(\Omega)}\left\{\int_\Omega(-u)S_k(D^2u)dx:\int_\Omega|x|^{2sk}|u|^{k+1}dx=1\right\}.
\label{formu117}
\end{align}

As discussed in \cite{CW01variational}, Theorem \ref{thm12} and Theorem \ref{thm13} can be referred to as the superlinear case and the sublinear case, respectively. We will utilize the method in \cite{CW01variational, Tso90functional} to prove the theorems. Specifically, we make use of a descent gradient flow of the functional $J$
\begin{align*}
J(u)=\int_\Omega\frac{(-u)S_k(D^2u)}{k+1}dx-\int_\Omega F(x,u)dx,
\end{align*}
where $F(x,z)=\int_z^0|x|^{2sk}f(x,\tau)d\tau$. The Euler-Lagrange equation of $J$ is precisely \eqref{formu17}. For the sublinear case, we obtain a flow that subconverges to a minimizer of $J$. For the superlinear case, we use the underlying idea of the mountain pass lemma and derive a min-max critical point of $J$. To prove the convergence of solution, we also need the uniform a priori regularity results for \eqref{formu17}.

In the following, we briefly review the regularity results of solutions to
\begin{align}
S_k(D^2u)=f\quad\text{in }\Omega.
\label{formunew}
\end{align}
For the nondegenerate case $0<f\in C^{1,1}$, the global $C^{3,\alpha}$ regularity of solutions was first solved by Caffarelli-Nirenberg-Spruck \cite{CNS85Dirichlet} and Ivochkina \cite{Ivochkina85Dirichlet}, and was later developed by Guan \cite{Guan94subsolution} and Trudinger \cite{Trudinger95Dirichlet}. For the degenerate case $f\geqs0$, the $C^{1,1}$ regularity of solutions has been extensively studied as well. Ivochkina-Trudinger-Wang \cite{ITW04degenerate} obtained the global $C^{1,1}$ regularity under the assumption $f^{1/k}\in C^{1,1}$, which gave an alternative proof of Krylov \cite{Krylov89payoff, Krylov95nonlinear}. Given that $f^{1/(k-1)}\in C^{1,1}$, Dong \cite{Dong06degenerate} established the global $C^{1,1}$ regularity for \eqref{formunew} with homogeneous boundary data. For general boundary condition, Jiao-Wang \cite{JH24degenerate} derived the $C^{1,1}$ estimates for convex solutions when $\Omega$ is uniformly convex. For the interior $C^{1,1}$ estimates, Chou-Wang \cite{CW01variational} extended the Pogorelov estimate  \cite{Pogorelov75} to \eqref{formunew}, provided that $f\in C^{1,1}_{loc}$ is positive inside the domain.

However, the weight $|x|^{2sk}$ (or $|x|^{2s}$, $|x|^{2sk/(k-1)}$) is not differentiable at the origin for almost every $s\neq0$, so that we could not apply the above $C^{1,1}$ estimates to equation \eqref{formu17}. Instead, we will utilize the following regularity results established in \cite{HH25weighted}, for both cases $s>0$ and $s<0$. 

\begin{thm}\label{thm14}
Let $u\in C^{3,1}(\Omega)\cap C^3({\overline\Omega})$ be a $k$-admissible solution of \eqref{formu17}. Suppose that $f^{1/k}\in C^{1,1}(\overline\Omega\times\mathbb{R})$ satisfies $f(x,z)>0$ if $z<0$. Then there exists a constant $\alpha\in(0,1)$ such that 
\begin{enumerate}[\rm(i)]
\item if $-1<s<0$, then for any $\Omega'\Subset\overline\Omega \setminus\{0\}$ and $\Omega''\Subset\Omega\setminus\{0\}$,
\begin{align*}
\Vert u\Vert_{C^\alpha(\overline{\Omega})}\leqs K(\Omega),\quad
\Vert u\Vert_{C^{1,1}(\Omega')}\leqs L(\Omega'),\quad
\Vert u\Vert_{C^{3,\alpha}(\Omega'')}\leqs C(\Omega''),
\end{align*}
where $K(\Omega),L(\Omega'),C(\Omega'')$ depend additionally on 
$n,k,s,\alpha,f$ and $\Vert u\Vert_{L^\infty(\Omega)}$.
\item if $s>0$, then for any $\Omega'\Subset\Omega \setminus\{0\}$,
\begin{align*}
\Vert u\Vert_{C^{1,1}(\overline{\Omega})}\leqs\hat{K}(\Omega),\quad
\Vert u\Vert_{C^{3,\alpha}(\Omega')}\leqs\hat{L}(\Omega'),
\end{align*}
where $\hat{K}(\Omega),\hat{L}(\Omega')$ depend additionally on 
$n,k,s,\alpha,f$ and $\Vert u\Vert_{L^\infty(\Omega)}$.
\end{enumerate}
\end{thm}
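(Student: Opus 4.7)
The plan is to split $\Omega$ into a region bounded away from the origin, where classical Hessian theory applies, and a neighborhood of $0$, where the degenerate (if $s>0$) or singular (if $s<0$) weight $|x|^{2sk}$ forces a dedicated argument. In both cases I would rewrite the equation as $S_k(D^2u)=g(x)$ with $g(x)=|x|^{2sk}f(x,u(x))$; since $\|u\|_{L^\infty(\Omega)}$ is finite and $f^{1/k}\in C^{1,1}(\overline\Omega\times\mathbb{R})$, controlling $u$ reduces to controlling $g^{1/k}=|x|^{2s}f^{1/k}(x,u)$.

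First, on any $\Omega''\Subset\Omega\setminus\{0\}$ the factor $|x|^{2s}$ is smooth and bounded away from $0$ and $\infty$, so $g^{1/k}\in C^{1,1}(\Omega'')$. The interior Pogorelov-type estimate of Chou-Wang \cite{CW01variational}, followed by an Evans-Krylov and Schauder bootstrap, yields the $C^{3,\alpha}(\Omega'')$ bound appearing in both (i) and (ii). Second, for the up-to-boundary bound on $\Omega'\Subset\overline\Omega\setminus\{0\}$ in case (i), and on $\overline\Omega\setminus B_\varepsilon(0)$ in case (ii), the weight remains $C^{1,1}$, so the global $C^{1,1}$ result of Ivochkina-Trudinger-Wang \cite{ITW04degenerate} applies directly, using $\partial\Omega\in C^{3,1}$ together with strict $(k-1)$-convexity. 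This handles everything except the behavior at the origin.

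The essential new content is therefore the analysis at $0$. For case (ii) with $s>0$, I would construct radial barriers of the form $\pm A(|x|^{2+2s}-R^{2+2s})$, which are $C^{1,1}$ at $0$ and match the scaling of the degenerate weight; a comparison argument combined with a Pogorelov-type identity adapted to the convex but non-$C^{1,1}$ factor $|x|^{2s}$ should produce a pointwise bound $|D^2u(0)|\leqs\hat K$ that glues with the off-origin estimate to give the global $C^{1,1}(\overline\Omega)$ bound. For case (i) with $-1<s<0$, the weight is integrable precisely because $s>-1$, and I would sandwich $u$ between radial sub- and supersolutions obtained by plugging the ansatz $u(x)=v(|x|)$ into $S_k(D^2u)=|x|^{2sk}\|f\|_\infty$ and solving the resulting ODE in $v$; this produces a modulus $|u(x)-u(0)|\leqs C|x|^\alpha$ with $\alpha=\alpha(n,k,s)\in(0,1)$, which upgrades the off-origin control to the full $C^\alpha(\overline\Omega)$ bound.

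The hard part will be the pointwise $C^{1,1}$ bound at the origin in case (ii): because $|x|^{2s}$ itself fails to be $C^{1,1}$ when $s\in(0,1)$, the classical Pogorelov argument cannot be quoted verbatim, and one must exploit the radial structure and the convexity of the weight in order to close the second-derivative estimate. This is precisely the refinement carried out in \cite{HH25weighted}, whose theorems we import and apply here.
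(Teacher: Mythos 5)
Theorem \ref{thm14} is not proved in this paper. The authors explicitly remark, just before stating it, that the weight $|x|^{2sk}$ is not differentiable at the origin, so the degenerate-Hessian $C^{1,1}$ theory of \cite{ITW04degenerate} and the Pogorelov estimate of \cite{CW01variational} cannot be invoked, and they simply quote the theorem as ``established in \cite{HH25weighted}.'' There is therefore no in-paper proof against which to check the details of your argument.

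That said, your reduction is consistent with what the paper itself signals. You correctly split $\Omega$ into an off-origin region, where $|x|^{2s}$ is smooth and bounded away from $0$ and $\infty$ so that classical Hessian theory (CNS boundary estimates together with strict $(k-1)$-convexity, interior Pogorelov, Evans--Krylov and Schauder) delivers the $C^{1,1}(\Omega')$ and $C^{3,\alpha}(\Omega'')$ bounds, and a neighborhood of $0$. Your barrier $\pm A\big(|x|^{2+2s}-R^{2+2s}\big)$ has precisely the right scaling, since for radial $w$ with $w'(r)\sim r^{1+2s}$ one computes $S_k(D^2w)\sim r^{2sk}$; and in case (i), integrating the radial ODE
\begin{align*}
\partial_r\big(r^{n-k}[v'(r)]^k\big)=C\,r^{n-1+2sk}
\end{align*}
gives $v'(r)\sim r^{2s+1}$ and hence $|u(x)-u(0)|\lesssim |x|^{2s+2}$, a Hölder modulus at the origin for $s\in(-1,0)$. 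Most importantly, you explicitly defer the genuinely hard step --- the pointwise $|D^2u(0)|$ bound for $0<s<1/2$, where $|x|^{2s}$ is merely $C^{0,2s}$ and no off-the-shelf Pogorelov argument closes --- to \cite{HH25weighted}, which is exactly the move the paper makes. Since the paper provides no argument of its own, the correctness of the pieces you sketched can only be audited against \cite{HH25weighted} directly, but nothing in your outline conflicts with the statement or with the authors' surrounding discussion.
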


We remark that the condition $f^{1/k}\in C^{1,1}(\overline{\Omega}\times\mathbb{R})$ plays a crucial role in the proof of Theorem \ref{thm14}, so as in Theorem \ref{thm12} and \ref{thm13}. For the special case $f(x,z)=|z|^p$ ($0<p<2k$ and $p\neq k$) or more general $f\in C^{1,1}(\overline{\Omega}\times\mathbb{R}^-)\cap C(\overline{\Omega\times\mathbb{R}^-})$, the local $C^{1,1}$ regularity of solutions to \eqref{formu17} is still not solved in $\Omega\setminus\{0\}$.

Finally, we introduce a nonexistence result for negative subsolutions of $\eqref{formu17}$ for the case $n>2k$ and $s\leqs-1$. This is a generalization of the semilinear case (see \cite{BC98nonexistence}).
\begin{thm}\label{thm15}
Let $\Omega\subset\mathbb{R}^n$ be a $(k-1)$-convex domain containing the origin. Suppose that $n>2k$, $s\leqs-1$ and $f(z)\in C^1(\mathbb{R}^-)$ is monotone decreasing with respect to $z$ satisfying $f(z)>0$ if $z<0$. Furthermore, for any $\varepsilon>0$, it holds that
\begin{align}
\int_{-\infty}^{-\varepsilon}f(z)^{-1/k}dz<\infty.
\label{formu118}
\end{align}
If $u\in C^0(\overline{\Omega})\cap\Phi_0^k(\Omega)$ is a viscosity subsolution of
\begin{equation}
\left\{
\begin{array}{ll}
 S_k(D^2u)=|x|^{2sk}f(u)  & \text{in }\Omega, \\
 u=0  & \text{on }\partial\Omega,
\end{array}
\right.
\label{formu119}
\end{equation}
then we have $u\equiv0$.
\end{thm}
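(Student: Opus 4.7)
My plan is to show that any nontrivial subsolution of \eqref{formu119} is forced by the singular weight to blow down to $-\infty$ at the origin, contradicting $u\in C^0(\overline{\Omega})$. The strategy has three steps: reduce to the borderline case $s=-1$, build an explicit radial supersolution that diverges at the origin, and then compare it with $u$ via a viscosity-theoretic argument that handles the singularity of the supersolution by truncation.

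First, I reduce to $s=-1$. On any ball $B_\rho\subset\Omega$ with $\rho\leqs 1$, the inequality $|x|^{2sk}\geqs|x|^{-2k}$ (valid for $s\leqs-1$ and $|x|\leqs 1$) shows that $u|_{B_\rho}$ is also a viscosity subsolution of $S_k(D^2 u)\geqs|x|^{-2k}f(u)$. Assuming for contradiction that $u\not\equiv 0$, the strong maximum principle for $k$-admissible functions with $u|_{\partial\Omega}=0$ gives $u<0$ throughout $\Omega$, so in particular $u(0)<0$; after shrinking $\rho$, I may assume $u\leqs-\mu$ on $\overline{B_\rho}$ for some $\mu>0$.

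Next, I construct the radial barrier $\psi(r)=\alpha\log(r/\rho)-\mu$ on $B_\rho\setminus\{0\}$, with $\alpha>0$ to be chosen. A direct computation shows that $D^2\psi$ has eigenvalue $-\alpha/r^2$ in the radial direction and $\alpha/r^2$ with multiplicity $n-1$ in tangential directions; because $n>2k$, $\psi$ is $k$-admissible and
$$S_k(D^2\psi)=c_{n,k}\,\alpha^k\,r^{-2k},\qquad c_{n,k}:=\binom{n-1}{k-1}\frac{n-2k}{k}>0.$$
Since $\psi\leqs-\mu$ on $B_\rho\setminus\{0\}$ and $f$ is decreasing, a small choice of $\alpha$ with $c_{n,k}\alpha^k\leqs f(-\mu)$ yields the supersolution inequality $S_k(D^2\psi)\leqs|x|^{-2k}f(\psi)$, along with $\psi(\rho)=-\mu\geqs u$ on $\partial B_\rho$ and $\psi(r)\to-\infty$ as $r\to 0^+$. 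The hypothesis \eqref{formu118} appears most transparently in a parallel radial ODE: for radial $u$, the subsolution inequality reads $[r^{n-k}(u')^k]'\geqs C\,r^{n-1-2k}f(u)$, which, after integration from $0$ and use of the monotonicities of $u$ and $f$, becomes $u'(r)\geqs C'\,r^{-1}f(u(r))^{1/k}$; separating variables gives
$$H(u(r_2))-H(u(r_1))\geqs C'\log(r_2/r_1),\qquad H(z):=\int_{-\infty}^{z}f(t)^{-1/k}\,dt,$$
and sending $r_1\to 0^+$, with $H(u(0))<\infty$ by \eqref{formu118}, yields the same $-\infty$ contradiction.

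To make the comparison rigorous at $\psi$'s singular point I would truncate: $\psi_N:=\max(\psi,-N)$ is continuous and bounded on $\overline{B_\rho}$, equal to $-N$ near $0$ and to $\psi$ outside a small inner ball. A case analysis of $C^2$ test functions touching $\psi_N$ from below at the interface sphere $\{\psi=-N\}$ shows that $\psi_N$ remains a viscosity supersolution. The standard viscosity comparison principle for the operator $u\mapsto S_k(D^2u)-|x|^{-2k}f(u)$, which is proper in $u$ since $f$ is decreasing, then gives $u\leqs\psi_N$ in $B_\rho$ for every $N\geqs\mu$; letting $N\to\infty$ yields $u(0)\leqs\lim_{N\to\infty}\psi_N(0)=-\infty$, the desired contradiction, so $u\equiv 0$. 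The main obstacle I foresee is precisely this last comparison step: rigorously verifying the viscosity supersolution property of $\psi_N$ at its interface and applying the comparison principle in the presence of the singular coefficient $|x|^{-2k}$, both of which require a careful analysis specific to the radial-barrier setting.
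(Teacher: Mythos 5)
Your barrier $\psi(r)=\alpha\log(r/\rho)-\mu$ and the calculation $S_k(D^2\psi)=c_{n,k}\alpha^k r^{-2k}$ with $c_{n,k}=\binom{n-1}{k-1}\frac{n-2k}{k}$ are correct, and the heuristic ODE paragraph actually contains the right mechanism (integrating $f^{-1/k}$ and invoking \eqref{formu118}). The fatal gap is the comparison step. The operator $F(x,r,X)=S_k(X)-|x|^{-2k}f(r)$ is \emph{not} proper in the Crandall--Ishii--Lions sense: since $f$ is monotone decreasing, $\partial_r F=-|x|^{-2k}f'(r)\geqs 0$, i.e.\ $F$ is \emph{nondecreasing} in $r$, the wrong sign. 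Concretely, if $u-\psi$ had an interior positive maximum at $x_0$, the usual argument gives $S_k(D^2u)(x_0)\leqs S_k(D^2\psi)(x_0)$ and hence $f(u(x_0))\leqs f(\psi(x_0))$; but because $f$ is decreasing, this is automatically true whenever $u(x_0)>\psi(x_0)$, and no contradiction arises. (Already for $k=1$, $n=1$ and $f(u)=-u$ one has the eigenfunction counterexample: $u=\sin x$ and $v=\epsilon\sin x$ are both solutions of $u''=-u$ on $[0,\pi]$ with equal boundary data, yet $u>v$ inside.) So ``the standard viscosity comparison principle'' simply does not apply here, and $u\leqs\psi_N$ does not follow. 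There is also a secondary problem with the truncation: $\psi_N=\max(\psi,-N)$ is the \emph{max} of two supersolutions, and the max of supersolutions is in general a subsolution, not a supersolution; at the interface sphere the convex kink admits $k$-admissible $C^2$ test functions touching from below with arbitrarily large radial second derivative, so the supersolution inequality $S_k(D^2\tilde v)(x_0)\leqs |x_0|^{-2k}f(-N)$ can fail.

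The paper's proof is designed precisely to sidestep this sign obstruction. It sets $\phi(z)=\int_{-\varepsilon_0}^{z}f(t)^{-1/k}\,dt$ (increasing and convex since $f>0$ and $f'\leqs 0$, and finite by \eqref{formu118}) and shows that $\tilde u=\phi(u)$ is a viscosity subsolution of the \emph{$u$-independent} equation $S_k(D^2w)=|x|^{2sk}$. Once the zeroth-order nonlinearity is gone, comparison is the unproblematic Dirichlet comparison for the pure Hessian operator. It then compares $\tilde u$ with the genuine admissible solution $w$ of $S_k(D^2w)=(|x|^2+\delta^2)^{-k}$, $w=0$ on $\partial B_\eta$ (the $\delta$-regularization replaces your truncation and avoids any interface issue), computes $w$ radially and finds $w(2\delta)\leqs c_{n,k}\log(2\delta/\eta)\to-\infty$, contradicting the boundedness of $\tilde u$, which comes exactly from \eqref{formu118}. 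In short: your log barrier and the $\int f^{-1/k}$ idea are both right, but you need to transplant the $f$-dependence into the unknown (as in the paper's change of variables $\tilde u=\phi(u)$) \emph{before} invoking comparison, not keep it on the right-hand side where the monotonicity of $f$ kills the comparison principle.
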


This paper is organized as follows. In Section 2, we introduce some results of parabolic Hessian equations. In Section 3, we prove the Hessian Hardy-Sobolev inequality. In Section 4 and Section 5, we study the variational problem \eqref{formu17}, respectively, for the sublinear case and the superlinear case. Finally, we prove Theorem \ref{thm15} in Section 6.
\\[0.5em]
\noindent\textbf{Acknowledgements.} The authors are grateful to Professor Genggeng Huang for suggesting this question and for helpful discussions.

\section{Preliminaries}
In this section, we will give some preliminary results concerning parabolic Hessian equations for latter applications.

Let $\Omega$ be a strictly $(k-1)$-convex bounded domain in $\mathbb{R}^n$ with the boundary $\partial\Omega\in C^{3,1}$. Denote $Q=\Omega\times(0,\infty)$ and $Q_T=\Omega\times(0,T]$. Consider the parabolic Dirichlet problem
\begin{equation}
\left\{
\begin{array}{ll}
\mu(S_k(D^2u))-u_t=g(x,t,u) \quad\text{in }Q_T,\\
u=\phi \quad\text{on }\{t=0\}, \qquad u=0 \quad\text{on }\partial\Omega\times[0,T],
\end{array}
\right.
\label{formu21}
\end{equation}
where $\phi\in C^{3,1}(\overline\Omega)$, $g\in C^2(\overline Q_T\times\mathbb{R})$ and $\mu$ satisfies $\mu'(z)>0, \mu''(z)<0$ for all $z>0$,
\begin{align}
\mu(z)\to-\infty\text{ as }z\to0^+,\quad\mu(z)\to+\infty\text{ as }z\to+\infty,
\label{formu22}
\end{align}
and $\mu(\sigma_k(\lambda))$ is concave with respect to $\lambda$. A typical choice of $\mu$ is $\mu(z)=\log z$. But as in \cite{CW01variational}, we also use a different function $\mu$, which satisfies the additional condition
\begin{equation}
\mu(z)=\left\{
\begin{array}{ll}
  z^{1/p} & z\geqs1, \\
  \log z & z<1/2,
\end{array}
\quad\text{for some }p>k.\right.
\label{formu23}
\end{equation}
A function $u(x,t)\in C^{2,1}(Q_T)$ is said to be $k$-admissible with respect to the equation \eqref{formu21}, if $u(\cdot,t)$ is $k$-admissible for any given $t\in[0,T]$. We note that the condition \eqref{formu22} is to ensure $\sigma_k(\lambda)>0$, and thus the admissibility keeps at all time.

The following lemmas contain the a priori estimates and existence results of solutions to parabolic Hessian equations. The proof was given in \cite{CW01variational, Wang94class}. We refer the readers to \cite{Tso90functional, TW98poincare, Lieberman96parabolic} for more details on various nonlinear parabolic equations.
\begin{lemma}\label{lem21}
Suppose that $\phi\in\Phi_0^k(\Omega)$ satisfies the compatibility condition
\begin{align}
\mu(S_k(D^2\phi))=g(x,t,\phi)\quad\text{on }\partial\Omega\times\{t=0\},
\label{formu24}
\end{align}
and suppose also that there exists a positive constant $C_0$ such that
\begin{align}
g(x,t,u)\leqs C_0(1+|u|)\quad\forall(x,t,u)\in \overline{Q}_T\times\mathbb{R}.
\label{formu25}
\end{align}
Then for any $T>0$, the initial-boundary value problem \eqref{formu21} admits an admissible solution $u\in C^{3+\alpha,1+\alpha/2}(\overline{Q}_T)$ for some $\alpha\in(0,1)$. 

If $g$ is uniformly bounded, then we have the uniform estimate $\Vert u\Vert_{L^\infty(Q_T)}\leqs C$ with $C>0$ independent of $T$. Moreover, if $g$ and its derivatives up to second order are uniformly bounded, then we have $\Vert u\Vert_{C^{3+\alpha,1+\alpha/2}(\overline{Q}_T)}\leqs C'$ with $C'>0$ independent of $T$.
\end{lemma}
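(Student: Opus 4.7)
The plan is to combine short-time existence from parabolic Schauder theory with $T$-independent a priori $C^{3+\alpha,1+\alpha/2}$ estimates and then extend by continuation. At any admissible function $v$, the linearization
\[
\mu'(S_k(D^2v))\,S_k^{ij}(D^2v)\partial_{ij} - \partial_t - g_u(x,t,v)
\]
is uniformly parabolic on compact subsets of $\Gamma_k$, so the inverse function theorem in Hölder spaces (using the compatibility condition \eqref{formu24} to match $\phi$ with the equation at $t=0$) provides a local admissible solution in $C^{3+\alpha,1+\alpha/2}$. The condition \eqref{formu22} that $\mu(z)\to-\infty$ as $z\to0^+$ acts as an infinite barrier preventing $S_k(D^2u)$ from touching $0$, so admissibility is preserved along the flow. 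Given uniform a priori $C^{3+\alpha,1+\alpha/2}$ bounds on $[0,T']$ for any $T'<T^*$, the solution extends to all of $[0,T]$.

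For the a priori estimates I would follow the standard scheme, as in \cite{CW01variational,Wang94class,Tso90functional}. \textbf{Step 1 ($C^0$):} compare $u$ with time-independent admissible barriers $\underline v,\overline v$ solving $\mu(S_k(D^2v))=\pm C$ with zero boundary data; combined with the growth hypothesis \eqref{formu25} and a Gronwall argument in $t$, this yields $\|u\|_{L^\infty(Q_T)}\leqs C$. \textbf{Step 2 (gradient):} boundary gradient bounds come from barrier functions built from the strict $(k-1)$-convexity of $\partial\Omega$ (as in \cite{CNS85Dirichlet}); interior gradient bounds follow from the maximum principle applied to $|Du|^2 e^{\psi(u)}$. \textbf{Step 3 (second derivatives):} double-tangential and mixed tangential–normal estimates on $\partial\Omega$ come from barrier constructions, while the double-normal second derivative is recovered by solving the equation for $u_{nn}$ using admissibility. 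The interior second-derivative estimate uses the assumed concavity of $\mu\circ\sigma_k$: differentiating the equation twice in a unit direction $\xi$ gives, after handling the nonlinearity via concavity, a differential inequality $(\partial_t - L)W\leqs C(1+W)$ for $W=\max_{|\xi|=1}u_{\xi\xi}$, which is closed by the maximum principle. \textbf{Step 4 (higher regularity):} the Evans–Krylov theorem for concave parabolic equations upgrades to $C^{2,\alpha}$, and differentiating \eqref{formu21} in $x$ and $t$ gives linear parabolic equations with $C^\alpha$ coefficients for the first derivatives, to which Schauder estimates apply, bootstrapping to $C^{3+\alpha,1+\alpha/2}$.

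The main obstacle I expect is the $T$-independence of the estimates when $g$ and its derivatives up to second order are uniformly bounded. For the $L^\infty$ bound, the Gronwall growth must be replaced by the static barriers $\underline v,\overline v$ above, which depend only on $\|g\|_\infty$ and $\Omega$; this gives $\|u\|_{L^\infty(Q_T)}\leqs C$ independent of $T$. All subsequent estimates must then be reexamined to verify that the constants depend only on $\|u\|_{L^\infty}$, $\|g\|_{C^2}$, and the geometry of $\Omega$, not on $T$. The delicate point is the boundary double-normal estimate: one needs a lower bound on $\mu'(S_k(D^2u))\cdot\partial S_k/\partial u_{nn}$ that does not degenerate as $S_k(D^2u)\to0^+$ (equivalently, as $\mu\to-\infty$). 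The structural conditions $\mu'>0$, $\mu''<0$, together with the admissible barrier providing a positive $T$-independent lower bound for $S_k(D^2u)$ on $\partial\Omega$, are precisely what rule out this degeneracy; this is the step where the particular hypotheses on $\mu$ in \eqref{formu22}–\eqref{formu23} are essential.
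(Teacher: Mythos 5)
The paper does not prove Lemma \ref{lem21}; it cites \cite{CW01variational, Wang94class} (and refers further to \cite{Tso90functional, TW98poincare, Lieberman96parabolic}) for the proof. Your outline — short-time existence via parabolic Schauder theory, barrier/maximum-principle a priori estimates from $C^0$ through second derivatives using concavity of $\mu\circ\sigma_k$, Evans--Krylov plus Schauder bootstrap to $C^{3+\alpha,1+\alpha/2}$, and continuation, with static barriers replacing the Gronwall bound to get $T$-independence when $g$ is uniformly bounded — is exactly the standard scheme of those references, and your remarks on where the structural conditions \eqref{formu22}--\eqref{formu23} on $\mu$ enter (preserving admissibility, and nondegeneracy in the double-normal boundary estimate) are on target.
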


\begin{lemma}\label{lem22}
In addition to the hypotheses in Lemma \ref{lem21}, suppose further that $\mu$ satisfies the condition \eqref{formu23}. Then for any $k$-admissible solution $u\in C^{4,2}(\overline Q_T)$ to the problem $\eqref{formu21}$, we have for $0<t<T$,
\begin{gather}
|\nabla_xu(x,t)|\leqs C_1\Big(1+M_t^{p/k}\Big), \label{formu26}\\
|u_t(x,t)|\leqs C_2(1+M_t), \label{formu27}
\end{gather}
where $M_t=\sup_{Q_t}|u|$ and the constants $C_1$, $C_2$ depend only on $n, k, p, \phi$, $C_0$ in \eqref{formu25} and the gradient of $g$.
\end{lemma}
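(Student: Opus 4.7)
The plan is to split the estimate into three maximum-principle arguments: a boundary gradient estimate, an interior gradient estimate (which I expect to be the main difficulty), and a separate time-derivative estimate obtained from the linearized PDE.

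For the boundary gradient bound, I would work on $\partial_p Q_T = (\overline\Omega \times \{0\}) \cup (\partial\Omega \times [0,T])$. At $t=0$, $|\nabla u| = |\nabla \phi|$ is bounded in terms of $\phi$. On the lateral boundary, $u \equiv 0$ forces $\nabla u$ to be along the inward normal, so it suffices to bound $|\partial_\nu u|$. Following the standard barrier method for Hessian equations, I would construct a sub/supersolution of the form $\pm A\bigl(d - \tau d^2\bigr)$ using the distance $d(x) = \operatorname{dist}(x, \partial\Omega)$, where the strict $(k-1)$-convexity of $\partial\Omega$ guarantees that such a barrier is $k$-admissible and that $\mu(S_k)$ of the barrier dominates $g + u_t$ for large $A$. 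Since $u_t = 0$ on $\partial\Omega$ and $\phi$ is a comparable initial barrier at $t = 0$, the parabolic comparison principle yields $|\nabla u| \le C$ on $\partial_p Q_T$, with $C$ depending only on $\phi$, $C_0$, and $\Omega$.

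For the interior gradient estimate, I would apply the parabolic maximum principle to an auxiliary function of the form
\begin{equation*}
W(x,t) = \log(1 + |\nabla u|^2) + \varphi(u),
\end{equation*}
with $\varphi(u) = -\beta \log(M_t + 1 - u)$ for a suitable constant $\beta > 0$. Denoting the linearized operator $L = F^{ij}\partial_{ij} - \partial_t$ with $F^{ij} = \mu'(S_k)S_k^{ij}$, I would first differentiate the PDE in $x_l$ to obtain $L u_l = g_{x_l} + g_u u_l$, and then compute $LW$ at an interior maximum. The critical-point relations $W_i = 0$ absorb several mixed terms, the concavity of $\mu \circ \sigma_k$ handles the $F^{ij}u_{li}u_{lj}$ term, and the identity $F^{ij}u_{ij} = k\mu'(S_k) S_k$ combined with the PDE gives $Lu = k\mu'(S_k)S_k - \mu(S_k) + g$. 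In the regime $S_k \ge 1$, the choice $\mu(z) = z^{1/p}$ specializes this to $Lu = (k/p - 1)S_k^{1/p} + g$ with $k/p - 1 < 0$; the negative coefficient is precisely what converts the maximum-principle inequality into a bound $S_k \le C(1 + M_t)^p$, and hence $|\nabla u|^2 \le CS_k^{2/k} \le C(1 + M_t^{2p/k})$. The regime $S_k < 1/2$, where $\mu = \log z$, is automatically under control since $\mu(S_k) \le 0$ there. Balancing the error terms from $\varphi$ with $\nabla g$ and $g_u$ completes the bound.

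For the time-derivative estimate, I would differentiate the PDE in $t$ to obtain the linear equation $Lu_t = g_t + g_u u_t$. The parabolic boundary data are controlled: on $\partial\Omega \times [0,T]$ we have $u_t = 0$, and at $t=0$ the compatibility condition \eqref{formu24} together with $\phi \in C^{3,1}$ gives $|u_t(\cdot,0)| \le C(\phi, g)$. To extract the $(1 + M_t)$ factor while keeping the constant $T$-independent, I would apply the comparison principle to $w = \pm u_t - A u$ for $A$ large (using $\mu' > 0$ so that the term $-A k\mu'(S_k) S_k$ has the favorable sign). A short computation shows that $Lw$ has the correct sign to place $w$ between two linear-in-$u$ barriers on $\partial_p Q_T$, yielding $|u_t| \le C_2(1 + M_t)$.

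The main obstacle will be the interior gradient step: the computation of $LW$ produces many competing terms, and the coefficient bookkeeping—combining the concavity inequality, the explicit structure of $\mu(z) = z^{1/p}$, and the critical-point cancellations—must be arranged so that the negative $S_k^{1/p}$ contribution from $Lu$ dominates, giving the sharp exponent $p/k$ in $M_t^{p/k}$.
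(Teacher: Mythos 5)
Your three-part decomposition (boundary barrier, interior maximum principle on an auxiliary function, time-derivative estimate) is the correct skeleton, matching the Chou--Wang/Wang argument the paper defers to, but the boundary gradient step as you state it is wrong, and that is precisely where the exponent $p/k$ in \eqref{formu26} must enter. You claim the barrier $\underline u = -A(d-\tau d^2)$ yields $|\nabla u|\le C$ on $\partial_p Q_T$ with $C$ independent of $M_t$. That cannot be: for the parabolic comparison to keep $\underline u\le u$, one needs $\mu(S_k(D^2\underline u))-\underline u_t\ge g(x,t,u)$ at an interior touching point, and since \eqref{formu25} only gives $g\le C_0(1+M_t)$ while $\mu(S_k(D^2\underline u))\sim\bigl(A^k\sigma_k(\kappa,2\tau)\bigr)^{1/p}$ under $\mu(z)=z^{1/p}$, the barrier parameter is forced to satisfy $A\gtrsim(1+M_t)^{p/k}$ (for $p\le k$ it would be bounded; the modified $\mu$ with $p>k$ is introduced exactly because the logarithm alone cannot dominate a forcing of size $M_t$). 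The resulting boundary normal derivative bound is $|\nabla u|\le CA\sim C(1+M_t^{p/k})$, which is the source of the stated exponent, not a constant.

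Your interior step then has the causality reversed. The maximum principle on $W=\log(1+|\nabla u|^2)+\varphi(u)$ does not produce $S_k\le C(1+M_t)^p$: that bound follows immediately from the PDE $\mu(S_k)=u_t+g$ once \eqref{formu27} is known, so the $u_t$ estimate must logically precede, not follow, the interior gradient computation. Moreover "$|\nabla u|^2\le CS_k^{2/k}$" is neither a maximum-principle output nor a valid pointwise inequality for $k$-admissible functions (it is not even scale-invariant). The actual role of the interior computation is different: with $\varphi(u)=-\beta\log(M_t+1-u)$ and $0<\beta<1$, both $\tfrac{2F^{ij}u_{li}u_{lj}}{1+|\nabla u|^2}$ and $[\varphi''-(\varphi')^2]F^{ij}u_iu_j$ are nonnegative and $\varphi'\bigl((k/p-1)S_k^{1/p}+g\bigr)$ is controlled by $O(1+M_t)\cdot\varphi'$, and the conclusion is that the maximum of $W$ over $\overline{Q}_T$ is attained on $\partial_p Q_T$ (or the interior maximum is harmlessly bounded); since $e^{\varphi}$ varies between $\partial\Omega$ and the interior by at most the factor $2^\beta$, the corrected boundary bound $|\nabla u|\le C(1+M_t^{p/k})$ propagates to all of $Q_T$. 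As written, your proposal would not establish \eqref{formu26}.
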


\section{Hessian Hardy-Sobolev Inequality}
In this section, we will prove Theorem \ref{thm11}. We first introduce the following lemma.
\begin{lemma}\label{lem31}
Suppose that $n>2k$, $-1\leqs s\leqs0$ and $k^*=k^*(s)$ given as in \eqref{formu13} and let $B_R=B_R(0)$ with some $R>0$. Then for all radially symmetric functions $u\in\Phi_0^k(B_R)$, it holds that
\begin{align*}
\Vert u\Vert_{L^{k^*}(B_R;|x|^{2sk})}\leqs C\Vert u\Vert_{\Phi_0^k(B_R)},
\end{align*}
where the constant $C$ depends only on $n,k$ and $s$.
\end{lemma}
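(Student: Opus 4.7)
The plan is to exploit radial symmetry to reduce the desired inequality to a one-dimensional weighted integral inequality, and then prove that 1D inequality by combining a pointwise estimate with a weighted Hardy inequality. Writing $u(x)=u(r)$ with $r=|x|$, the eigenvalues of $D^2u$ are $u''(r)$ (simple) and $u'(r)/r$ (multiplicity $n-1$), so that
\begin{align*}
S_k(D^2u)=\frac{1}{k}\binom{n-1}{k-1}\frac{1}{r^{n-1}}\frac{d}{dr}\bigl(r^{n-k}(u'(r))^{k}\bigr).
\end{align*}
Since $u\in\Phi_0^k(B_R)$ forces $\Delta u\geqs 0$, the identity $(r^{n-1}u')'=r^{n-1}\Delta u\geqs 0$ together with $u'(0)=0$ yields $u'(r)\geqs 0$, and hence $u\leqs 0$ throughout $B_R$. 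A single integration by parts (the boundary contributions vanish since $u(R)=0$ and $u'(0)=0$) then gives
\begin{align*}
I_k(u)=c_{n,k}\,\omega_n\int_0^R r^{n-k}(u'(r))^{k+1}\,dr,\qquad c_{n,k}=\frac{1}{k}\binom{n-1}{k-1}.
\end{align*}
The lemma is thus reduced to the one-dimensional inequality
\begin{align*}
\int_0^R r^{n-1+2sk}|u|^{k^{*}}\,dr\leqs C\Bigl(\int_0^R r^{n-k}(u')^{k+1}\,dr\Bigr)^{k^{*}/(k+1)}.
\end{align*}

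Next I would assemble two 1D ingredients. The first is a pointwise estimate: since $|u(r)|=\int_r^R u'(\rho)\,d\rho$, H\"older's inequality with conjugate exponents $k+1$ and $(k+1)/k$ together with the convergence of $\int_r^\infty \rho^{-(n-k)/k}\,d\rho$ (which uses $n>2k$) gives
\begin{align*}
|u(r)|\leqs C\,r^{(2k-n)/(k+1)}\,\Bigl(\int_0^R \rho^{n-k}(u'(\rho))^{k+1}\,d\rho\Bigr)^{1/(k+1)}.
\end{align*}
The second is the weighted Hardy inequality
\begin{align*}
\int_0^R r^{n-2k-1}|u|^{k+1}\,dr\leqs\Bigl(\frac{k+1}{n-2k}\Bigr)^{k+1}\int_0^R r^{n-k}(u')^{k+1}\,dr,
\end{align*}
which I would obtain by writing $r^{n-2k-1}=(n-2k)^{-1}(r^{n-2k})'$, integrating by parts, and then applying H\"older with the same pair of conjugate exponents; the boundary contributions vanish and the resulting factor of $(\int r^{n-2k-1}|u|^{k+1})^{k/(k+1)}$ may be absorbed on the left.

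Finally, I would combine the two ingredients through the factorisation $|u|^{k^{*}}=|u|^{k+1}\cdot|u|^{k^{*}-k-1}$: inserting the pointwise bound into the second factor converts the weight $r^{n-1+2sk}$ into $r^{n-1+2sk+(2k-n)(k^{*}-k-1)/(k+1)}$, and the choice $k^{*}=(k+1)(n+2sk)/(n-2k)$ is exactly what forces this exponent to collapse to the Hardy weight $r^{n-2k-1}$. Applying the Hardy estimate and keeping track of the powers of $\int_0^R r^{n-k}(u')^{k+1}\,dr$ then produces $\int_0^R r^{n-1+2sk}|u|^{k^{*}}\,dr\leqs C\,[I_k(u)]^{k^{*}/(k+1)}$, which is the radial form of \eqref{formu14}. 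The principal delicate point is precisely this exponent calculation, since the whole architecture of the argument is calibrated to the algebraic identity that produces the Hardy weight; the only remaining technicality is the justification of the boundary terms in the integrations by parts, which is routine for smooth radial $k$-admissible functions and extends to the general case by approximation.
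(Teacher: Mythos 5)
Your reduction to the one-dimensional inequality
\begin{align*}
\int_0^R r^{n-1+2sk}|u|^{k^{*}}\,dr\leqs C\Bigl(\int_0^R r^{n-k}(u')^{k+1}\,dr\Bigr)^{k^{*}/(k+1)}
\end{align*}
is exactly the reduction in the paper: both arguments compute $S_k(D^2u)$ for radial $u$, integrate by parts against $(-u)$ using $u'(0)=0$ and $u(R)=0$, and obtain $I_k(u)=c_{n,k}\omega_n\int_0^R r^{n-k}(u')^{k+1}\,dr$. Where you diverge is in how the 1D weighted inequality is established. The paper simply invokes the Caffarelli--Kohn--Nirenberg inequality in dimension $N=1$ with the parameters $q=k+1$, $\alpha=(n-k)/(k+1)$, $p=k^{*}$, $\beta=(n-1+2sk)/k^{*}$, whereas you give a self-contained proof: a pointwise decay bound $|u(r)|\leqs Cr^{(2k-n)/(k+1)}\bigl(\int_0^R\rho^{n-k}(u')^{k+1}\,d\rho\bigr)^{1/(k+1)}$ obtained from H\"older applied to $|u(r)|=\int_r^R u'$, a weighted Hardy inequality for the exponent $n-2k-1$ obtained by the usual integrate-by-parts-then-H\"older-then-absorb trick, and the interpolation $|u|^{k^{*}}=|u|^{k+1}\cdot|u|^{k^{*}-k-1}$ (which requires $k^{*}\geqs k+1$, i.e.\ $s\geqs-1$, so the assumed range is exactly what is needed). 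The exponent bookkeeping is right: inserting the pointwise bound turns $r^{n-1+2sk}$ into $r^{n-1+2sk+(2k-n)(k^{*}-k-1)/(k+1)}=r^{n-2k-1}$ because $(2k-n)(k^{*}-k-1)/(k+1)=-2k(s+1)$, and then Hardy and the collected power of $\int r^{n-k}(u')^{k+1}$ produce the exponent $k^{*}/(k+1)$. What you gain over the paper's route is transparency and an explicit (if non-optimal) constant; you also sidestep any quibble that the functions here vanish only at $r=R$ rather than being compactly supported in $(0,\infty)$ as in the textbook statement of CKN, since your pointwise estimate is built directly on $u(R)=0$. What the paper's route buys is brevity and a more direct handoff to the optimization/extremal discussion in Step~3, which is phrased in CKN/Bliss language. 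Both are correct.
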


\begin{proof}
For a radially symmetric function $u\in\Phi_0^k(B_R)$, we have by direct calculation
\begin{align*}
S_k(D^2u)=\binom{n-1}{k-1}u''(r)\Big[\frac{u'(r)}r\Big]^{k-1}+\binom{n-1}{k}\Big[\frac{u'(r)}r\Big]^{k}\quad\text{on }\{|x|=r,0<r<R\}.
\end{align*}
Then using integration by parts, we obtain
\begin{align}
\int_{B_R}(-u)S_k(D^2u)dx&=\omega_n\int_0^R(-u)\left\{\binom{n-1}{k-1} u''(r)\Big[\frac{u'(r)}r\Big]^{k-1}+\binom{n-1}{k} \Big[\frac{u'(r)}r\Big]^{k}\right\}r^{n-1}dr \nonumber\\
&=C\int_0^R(-u)\left(kr^{n-k}u''(r)[u'(r)]^{k-1}+(n-k)r^{n-k-1}[u'(r)]^k\right)dr\nonumber\\
&=C\int_0^R(-u) \partial_r(r^{n-k}[u'(r)]^k)dr\nonumber\\
&=C\int_0^Rr^{n-k}[u'(r)]^{k+1}dr,
\label{formu31}
\end{align}
where the last equality follows from $u'(0)=0, u(R)=0$. Since $S_k(D^2u)\geqs0$, we have $\partial_r(r^{n-k}[u'(r)]^k)\geqs0$ and hence $u'(r)\geqs0$ for $0<r<R$. On the other side,
\begin{align}
\int_{B_R}|x|^{2sk}|u|^{k^*}dx=\omega_n\int_0^Rr^{n-1+2sk}|u(r)|^{k^*}dr.
\label{formu32}
\end{align}
Applying Caffarelli-Kohn-Nirenberg inequality $\Vert|x|^{\beta}u\Vert_{L^p}\leqs \widetilde C\Vert|x|^{\alpha}Du\Vert_{L^q}$ (see \cite{CKN84inequality}) for dimension $N=1$ and 
\begin{align*}
q=k+1,\quad \alpha=\frac{n-k}{k+1},\quad p=k^*,\quad \text{and}\quad \beta=\frac{n-1+2sk}{k^*},
\end{align*}
we can obtain
\begin{align}
\left(\int_0^Rr^{n-1+2sk}|u(r)|^{k^*}dr\right)^{1/k^*}\leqs \widetilde C\left(\int_0^Rr^{n-k}[u'(r)]^{k+1}dr\right)^{1/(k+1)},
\label{formu33}
\end{align}
where the constant $\widetilde C$ depends only on $n,k$ and $s$. Combining \eqref{formu31}$\sim$\eqref{formu33}, we finally derive the desired result.
\end{proof}

\textbf{Proof of Theorem \ref{thm11}.} We divide the proof into three steps.
\\[0.5em]
\textbf{Step 1.} We prove Theorem \ref{thm11} holds for general $k$-admissible functions when $\Omega=B_R(0)$ for any $R>0$. Indeed, denote
\begin{gather*}
T_s=\inf\left\{\frac{\Vert u\Vert_{\Phi_0^k(B_R)}^{k+1}}{\Vert u\Vert_{L^{k^*}(B_R;|x|^{2sk})}^{k+1}}: u\in\Phi_0^k(B_R)\right\},\\
T_{s,r}=\inf\left\{\frac{\Vert u\Vert_{\Phi_0^k(B_R)}^{k+1}}{\Vert u\Vert_{L^{k^*}(B_R;|x|^{2sk})}^{k+1}}: u\in\Phi_0^k(B_R)\text{ is radial}\right\}.
\end{gather*}
By Lemma \ref{lem31}, we have $T_{s,r}\geqs c_0>0$ for some $c_0$ independent of $R$. We then claim that $T_s=T_{s,r}$. Suppose on the contrary that $T_s<T_{s,r}$. Fix a constant $\lambda\in(T_s,T_{s,r})$ and consider the functional
\begin{align}
J(u)=J(u,\Omega)=\int_\Omega\frac{(-u)S_k(D^2u)}{k+1}dx-\frac\lambda{k+1}\left(k^*\int_\Omega F(x,u)dx\right)^{(k+1)/k^*},
\label{formu34}
\end{align}
where
\begin{align*}
F(x,u)=(|x|^2+\delta^2)^{sk}\int_0^{|u|}f(t)dt,
\end{align*}
and $f$ is a smooth, positive function satisfying
\begin{equation}
f(t)=
\left\{
\begin{array}{ll}
   \delta^{k^*-1}&,|t|<\delta\\
   |t|^{k^*-1}&,2\delta<|t|<M\\
   \epsilon t^{-2}&,|t|>M+\epsilon
\end{array}
\right.,
\label{formu35}
\end{equation}
where $M>0$ is a large constant and $\delta,\epsilon>0$ are small constants. We can also assume that $f$ is monotone increasing when $\delta\leqs|t|\leqs2\delta$, and $\epsilon M^{-2}\leqs f(t)\leqs|t|^{k^*-1}$ when $M\leqs|t|\leqs M+\epsilon$. Therefore, $F$ is uniformly bounded and $J(u)$ is bounded from below. By our choice of $\lambda$, we have
\begin{gather}
\inf\{J(u):u\in\Phi_0^k(B_R)\}<-1\quad\text{if }M>>1,
\label{formu36}\\
\inf\{J(u):u\in\Phi_0^k(B_R)\text{ is radial}\}\to0\quad\text{as }\delta\to0.
\label{formu37}
\end{gather}

Due to the variational structure of $S_k$, the Euler equation of the functional $J$ can be written as
\begin{align}
S_k(D^2u)=\lambda\eta(u)(|x|^2+\delta^2)^{sk}f(u),
\label{formu38}
\end{align}
where
\begin{align}
\eta(u)=\left(k^*\int_\Omega F(x,u)dx\right)^{(k+1-k^*)/k^*}.
\label{formu39}
\end{align}
Note that $\eta(u)$ is a constant if given some $u\in\Phi_0^k(\Omega)$ . For simplicity, we denote $\psi(x,u)=\lambda\eta(u)(|x|^2+\delta^2)^{sk}f(u)$. In order to obtain a solution of \eqref{formu38}, we next consider the parabolic equation
\begin{align}
\log S_k(D^2u)-u_t=\log\psi(x,u)\quad\text{in }(x,t)\in Q:=\Omega\times(0,+\infty),
\label{formu310}
\end{align}
with the boundary condition
\begin{align*}
u(\cdot,t)=0\quad\text{on }\partial\Omega,\ \forall t\geqs0.
\end{align*}
Select the initial condition $u_0\in\Phi_0^k(\Omega)\cap C^4(\overline\Omega)$ such that
\begin{align*}
J(u_0)<\inf_{\Phi_0^k(\Omega)}J(u)+\epsilon_0<-1,
\end{align*}
by virtue of \eqref{formu36}. By a slight modification as in \cite{CW01variational, Wangnote}, we can assume that $u_0$ also satisfies the compatibility condition $S_k(D^2u_0)=\psi(x,u_0)$ on $\partial\Omega$. Notice that the equation \eqref{formu310} is a descent gradient flow of the functional $J$. Indeed, if $u(x,t)$ is a smooth solution of \eqref{formu310}, then
\begin{align}
\frac{d}{dt}J(u(\cdot,t))&=-\int_{\Omega}\big(S_k(D^2u)-\psi(x,u)\big)u_tdx \nonumber\\
&=-\int_{\Omega}\big(S_k(D^2u)-\psi(x,u)\big)\log\frac{S_k(D^2u)}{\psi(x,u)}dx\leqs0.
\label{formu311}
\end{align}
Hence, we have the a priori estimate $J(u(\cdot,t))\leqs-1$ for $t\geqs0$. Therefore, there exists a positive constant $C_0>0$ such that
\begin{align*}
k^*\int_{\Omega}F(x,u)dx\geqs C_0
\end{align*}
holds for all $t\geqs0$. By the boundedness of $F$ and \eqref{formu39}, it follows that
\begin{align*}
C_1\leqs\eta(u)\leqs C_2,
\end{align*}
where $C_1,C_2$ are positive constants independent of $t$. Then, it follows that $\log\psi(x,u)$ is uniformly bounded when $M,\epsilon,\delta>0$ are given, and thus $u$ has a uniform $L^\infty$-bound independent of $t$. Besides, using the estimate of $u_t$ in Lemma \ref{lem22}, we can obtain that $|\partial_t\eta(u)|$ is uniformly bounded. Therefore, by applying Lemma \ref{lem21} to the parabolic equation \eqref{formu310}, there exists a global solution $u(x,t)\in C^{3+\alpha,1+\alpha/2}(\overline{Q})$ satisfying $\Vert u\Vert_{C^{3+\alpha,1+\alpha/2}(\overline{Q})}\leqs C$, where $C$ might depend on $M,\varepsilon,\delta$ but not on $t$. Since \eqref{formu311} and $J$ is bounded from below, we derive a sequence $t_j\to\infty$ such that $(d/dt)J(u(\cdot,t_j))\to0$. Hence, by applying the Arzel\`a-Ascoli Theorem, we can obtain a subsequence of $\{u(\cdot,t_j)\}$ which converges to a function $\tilde u\in\Phi_0^k(\Omega)$ in $C^3(\overline\Omega)$. Note that $\tilde u$ is a solution of the elliptic equation \eqref{formu38} in $\Omega=B_R$ and $\tilde u$ satisfies $J(\tilde u)\leqs-1$.

Applying the Alexandrov's moving plane method \cite{GNN79symmetry} to the equation \eqref{formu38}, we infer that $\tilde u$ must be a radially symmetric function. Indeed, denote
\begin{align*}
\mathscr{L}(x,u,u_{ij})=S_k(D^2u)-\psi(x,u).
\end{align*}
Notice that since $\delta,\epsilon>0$, the operator $\mathscr{L}$ is $C^1$ and the equation $\mathscr{L}(x,\tilde u,\tilde u_{ij})=0$ is uniformly elliptic. By $s\leqs0$, $\mathscr{L}(x,\cdot,\cdot)$ satisfies the monotone increasing condition with respect to $|x|>0$. Hence, by the symmetric result (see Theorem 3.1 in \cite{GNN79symmetry}), we deduce that the solution $\tilde u$ is a radial function. Therefore we have
\begin{align*}
\inf\{J(u):u\in\Phi_0^k(B_R)\text{ is radial}\}\leqs-1,
\end{align*}
which yields a contradiction to \eqref{formu37} when $\delta,\epsilon$ are small. This completes the proof of our claim $T_s=T_{s,r}$.
\\[0.5em]
\textbf{Step 2.} In this step, we deal with general $(k-1)$-convex domains $\Omega$. Denote
\begin{align*}
T_s(\Omega)=\inf\left\{\frac{\Vert u\Vert_{\Phi_0^k (\Omega)}^{k+1}}{\Vert u\Vert_{L^{k^*}(\Omega;|x|^{2sk})}^{k+1}} :u\in\Phi_0^k(\Omega)\right\}.
\end{align*}
We claim that for any smooth $(k-1)$-convex areas $\Omega_1\subset\Omega_2$, it follows that $T_s(\Omega_1)\geqs T_s(\Omega_2)$. If it is not true, let $\lambda\in(T_s(\Omega_1),T_s(\Omega_2))$ be a constant and $J(u,\Omega)$ be defined as in \eqref{formu34}. Then, by our choice of $\lambda$, we have
\begin{gather}
\inf\{J(u,\Omega_1):u\in\Phi_0^k(\Omega_1)\}<-1\quad\text{if }M>>1, \label{formu312}\\
\inf\{J(u,\Omega_2):u\in\Phi_0^k(\Omega_2)\}\to0\quad\text{as }\delta\to0.  \nonumber
\end{gather}
By repeating the process in Step 1, we can derive a $k$-admissible solution $u_1\in\Phi_0^k(\Omega_1)$ to the equation \eqref{formu38} and it satisfies $J(u_1,\Omega_1)\leqs-1$. Let $R>0$ be large enough so that $\Omega_1\subset B_R(0)$ and denote
\begin{align*}
w(x)=-M-\epsilon-\frac12\epsilon^{1/2k}(R^2-|x|^2).
\end{align*}
Recall that $f(t)=\epsilon t^{-2}$ when $|t|>M+\epsilon$, and $C_1\leqs\eta(u_1)\leqs C_2$ with constants $C_1,C_2$ independent of $\epsilon$. Hence, we have $S_k(D^2w)= C\epsilon^{1/2}>\psi(x,u_1)=S_k(D^2u_1)$ in the set $\{u_1<-M-\epsilon\}$ when $\epsilon$ is sufficiently small. By applying the comparison principle, it follows that 
\begin{align}
u_1\geqs-M-\epsilon-\epsilon^{1/2k}R^2\quad\text{in }\Omega_1,
\label{formu313}
\end{align}
and thus
\begin{align}
F(x,u_1)=(|x|^2+\delta^2)^{sk}\left(\frac1{k^*}|u_1|^{k^*}+o(1)\right),
\label{formu314}
\end{align}
if $\epsilon,\delta$ are small. Therefore, by \eqref{formu39} we have
\begin{align}
\eta(u_1)=(1+o(1))\left(\int_{\Omega_1}|x|^{2sk}|u_1|^{k^*}dx \right)^{(k+1-k^*)/k^*},
\label{formu315}
\end{align}
where $o(1)\to0$ as $\epsilon,\delta\to0$.

Extend $u_1$ to $\Omega_2$ so that $u_1=0$ in $\Omega_2-\Omega_1$. Define $\phi(x)=S_k(D^2u_1)$ in $\Omega_1$ and $\phi(x)=0$ in $\Omega_2-\Omega_1$. Consider the functional
\begin{align}
E(v)=\int_{\Omega_2}(-v)\phi dx-\lambda\left(\int_{\Omega_2} |x|^{2sk}|v|^{k^*}dx\right)^{(k+1)/k^*}=\text{\Rmnum{1}}-\lambda\text{\Rmnum{2}}.
\end{align}
Claim that $E(v)$ is concave. Observe that \Rmnum{1} is linear, and thus we only need to verify that \Rmnum{2} is convex. By direct calculation of second variation, we have
\begin{align*}
\frac{d^2}{dt^2}\text{\Rmnum{2}}(u+&tv)\bigg|_{t=0}=(k+1)(k^*-1)\left(\int_{\Omega_2}|x|^{2sk}|u|^{k^*-2}|v|^2dx\right)\left(\int_{\Omega_2}|x|^{2sk}|u|^{k^*}dx\right)^{(k+1-k^*)/k^*}\\
&+(k+1)(k+1-k^*)\left(\int_{\Omega_2}|x|^{2sk}|u|^{k^*-2}uvdx\right)^2 \left(\int_{\Omega_2}|x|^{2sk}|u|^{k^*}dx\right)^{(k+1-2k^*)/k^*}.
\end{align*}
Hence by H\"older's inequality, it follows that $(d^2/dt^2)\text{\Rmnum{2}}(u+tv)\big|_{t=0}\geqs0$ for any $u$ and $v$, which implies that \Rmnum{2} is convex.

Since $u_1=0$ in $\Omega_2-\Omega_1$, we have by \eqref{formu312} and \eqref{formu314}
\begin{align*}
E(u_1)&=\int_{\Omega_1}(-u_1)S_k(D^2u_1)dx-\lambda \left(\int_{\Omega_1}|x|^{2sk}|u_1|^{k^*}dx\right)^{(k+1)/k^*}\\
&=(k+1)J(u_1,\Omega_1)+o(1)\leqs-k,
\end{align*}
when $o(1)$ is sufficiently small as $\delta,\epsilon\to0$. Consider $u_{2,m}\in\Phi_0^k(\Omega_2)$ as the solution of
\begin{align*}
S_k(D^2u)=\phi_m\quad\text{in }\Omega_2,
\end{align*}
where $\{\phi_m\}$ is a sequence of smooth positive functions which converges decreasingly to $\phi$. By the comparison principle, we have $u_{2,m}<u_1\leqs0$ in $\Omega_1$. Furthermore, $u_{2,m}$ is uniformly bounded in $C(\overline{\Omega}_2)$. Therefore, $u_2=u_{2,m}$ satisfies
\begin{align*}
E(u_2)&=\int_{\Omega_2}(-u_2)\phi dx-\lambda \left(\int_{\Omega_2}|x|^{2sk}|u_2|^{k^*}dx\right)^{(k+1)/k^*}\\
&\geqs\int_{\Omega_2}(-u_2)S_k(D^2u_2)dx-\lambda \left(\int_{\Omega_2}|x|^{2sk}|u_2|^{k^*}dx\right)^{(k+1)/k^*}+o(1)\geqs-\frac12,
\end{align*}
provided $m$ large and $\delta,\epsilon$ small enough. Here, the last inequality follows by our choice of $\lambda\in(T_s(\Omega_1),T_s(\Omega_2))$.

Denote $\rho(t)=E(u_1+t(u_2-u_1))$. Then it follows $\rho(0)=E(u_1)\leqs-k$ and $\rho(1)=E(u_2)\geqs-\frac12$. Claim that $\rho'(0)<0$. Indeed, we compute
\begin{align*}
\rho'(0)=&\int_{\Omega_1}(u_1-u_2)S_k(D^2u_1)dx \\
&-\lambda(k+1)\left(\int_{\Omega_1}|x|^{2sk}|u_1|^{k^*-1}(u_1-u_2)dx\right)\left(\int_{\Omega_1}|x|^{2sk}|u_1|^{k^*}dx\right)^{(k+1-k^*)/k^*}.
\end{align*}
Since $u_1$ solves \eqref{formu38}, by \eqref{formu35}, \eqref{formu313} and \eqref{formu315}, we have
\begin{align*}
&\int_{\Omega_1}(u_1-u_2)S_k(D^2u_1)dx\\
=&\lambda\eta(u_1) \int_{\Omega_1}(u_1-u_2)(|x|^2+\delta^2)^{sk}f(u_1)dx \\
=&\lambda(1+o(1))\left(\int_{\Omega_1}|x|^{2sk}|u_1|^{k^*-1}(u_1-u_2)dx+o(1)\right)\left(\int_{\Omega_1}|x|^{2sk} |u_1|^{k^*}dx \right)^{(k+1-k^*)/k^*} \\
<&\lambda(k+1)\left(\int_{\Omega_1}|x|^{2sk}|u_1|^{k^*-1}(u_1-u_2)dx\right)\left(\int_{\Omega_1}|x|^{2sk} |u_1|^{k^*}dx \right)^{(k+1-k^*)/k^*},
\end{align*}
provided $\delta,\epsilon>0$ sufficiently small. Hence, we obtain $\rho'(0)<0$. Since the functional $E$ is concave, we have $\rho'(t)<0$ for all $t\in[0,1]$. Thus it must follow that $\rho(1)<\rho(0)$, which leads to a contradiction.
\\[0.5em]
\textbf{Step 3.} By Step 1 and Step 2, we prove that for any $(k-1)$-convex domain, the inequality \eqref{formu14} holds for all $u\in\Phi_0^k(\Omega)$. What remains is the existence of the extremal function. Here we utilize the idea of \cite[Theorem 3.1]{GY00multiple}, and assert that when $-1<s\leqs0$, the best constant of \eqref{formu14} can be attained when $\Omega=\mathbb{R}^n$ by the function defined as \eqref{formu15}.

Indeed, it is shown by Step 2 that the best constant in \eqref{formu14} remains the same if the function $u$ is restricted in the set of all radially symmetric admissible functions. Thus, we consider the radial case. Recall that for $u=u(r)$,
\begin{gather*}
\int_{\mathbb{R}^n}(-u)S_k(D^2u)dx=C\int_0^{\infty}r^{n-k}[u'(r)]^{k+1}dr,\\
\int_{\mathbb{R}^n}|x|^{2sk}|u|^{k^*}dx=C'\int_0^{\infty}r^{n-1+2sk}|u(r)|^{k^*}dr.
\end{gather*}
To continue, we need the following lemma from Bliss \cite{Bliss30inequality}.
\begin{lemma}
Let $p_0,q_0$ be constants such that $q_0>p_0>1$. Let $f(x)$ be a real-valued nonnegative measurable function in the interval $0\leqs x<\infty$ such that the integral $J_0=\int_0^{\infty}f^{p_0}(x)dx$ is finite and given. Then the integral $g(x)=\int_0^xf(t)dt$ is finite for every $x$, and
\begin{align*}
I_0=\int_0^{\infty}g^{q_0}(x)x^{s_0-q_0}dx
\end{align*}
attains its maximum value at the functions of the form
\begin{align*}
f(x)=(\lambda x^{s_0}+1)^{-(s_0+1)/s_0},
\end{align*}
where $s_0=q_0/p_0-1$ and $\lambda$ be a positive constant.
\end{lemma}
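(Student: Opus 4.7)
The plan is to treat this as a constrained variational problem and analyse the resulting Euler--Lagrange equation. First, the finiteness of $g(x)$ for every $x>0$ follows from Hölder's inequality on $[0,x]$, giving $g(x)\leqs x^{1-1/p_0}J_0^{1/p_0}$. For the existence of a maximizer I would apply the direct method: replace a maximizing sequence by its decreasing rearrangement (a short check shows this can only increase $I_0$ and preserves $J_0$), then exploit the dilation invariance $f(x)\mapsto b^{1/p_0}f(bx)$, which leaves both $I_0$ and $J_0$ unchanged, to normalise the scale (for instance by fixing the median of $f^{p_0}$). Weak $L^{p_0}$ compactness together with a uniform tail estimate for the rescaled sequence then produces a nonnegative maximizer $f^*$.

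The next step is to derive the Euler--Lagrange equation. Taking the first variation of $\mathcal L[f]=I_0[f]-\mu p_0\int_0^\infty f^{p_0}dx$ in the direction $\phi$ and exchanging the order of integration by Fubini yields
\[
q_0\int_x^\infty g^{q_0-1}(t)t^{s_0-q_0}\,dt=\mu p_0 f^{p_0-1}(x)\quad\text{for a.e.\ }x>0.
\]
Differentiating this identity once and substituting $g'=f$ produces the nonlinear second-order ODE
\[
\mu p_0(p_0-1)(g'(x))^{p_0-2}g''(x)=-q_0\,g^{q_0-1}(x)\,x^{s_0-q_0},
\]
complemented by $g(0)=0$ and the decay condition at infinity needed to kill the boundary term from the preceding integration by parts.

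To solve this ODE explicitly I would try the ansatz $g(x)=Cx(\lambda x^{s_0}+1)^{-1/s_0}$. Direct differentiation gives $f(x)=g'(x)=C(\lambda x^{s_0}+1)^{-(s_0+1)/s_0}$, which matches the claimed form; substituting back into the ODE determines the Lagrange multiplier $\mu$ in closed form in terms of $p_0,q_0,\lambda$ (the key identity $s_0+1=q_0/p_0$ is exactly what makes all exponents balance), while $\lambda>0$ is left as a free parameter tuned so that $\int_0^\infty f^{p_0}dx=J_0$. A uniqueness argument for this autonomous-type ODE, combined with the one-parameter scaling symmetry identified above, shows that this family exhausts the stationary points.

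The main obstacle, I expect, is the compactness step: the explicit dilation invariance allows a maximizing sequence to concentrate at the origin or escape to infinity, so some care is needed in rearranging and rescaling before one can extract a strongly convergent subsequence. A secondary issue is confirming that the critical point produced by the Euler--Lagrange analysis is actually the global maximizer; this I would handle either by a concavity/Hölder argument for the dual functional (in the spirit of Step 2 of the proof of Theorem \ref{thm11}) or by verifying that the value of $I_0/J_0^{q_0/p_0}$ along the explicit extremal family agrees with the supremum bound provided by the derived ODE.
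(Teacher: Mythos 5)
The paper itself does not prove this lemma: it is quoted verbatim from Bliss \cite{Bliss30inequality} and used as a black box, so there is no internal proof to compare against. Your variational sketch is a reasonable modern outline. The H\"older bound $g(x)\leqs x^{1-1/p_0}J_0^{1/p_0}$ for pointwise finiteness is correct; the Euler--Lagrange computation is right up to an innocuous renaming of the multiplier (with $\mathcal L=I_0-\mu p_0\int f^{p_0}$ the variation of the constraint term is $\mu p_0^2 f^{p_0-1}$, not $\mu p_0 f^{p_0-1}$, but this only rescales $\mu$); and your ansatz $g(x)=Cx(\lambda x^{s_0}+1)^{-1/s_0}$ does differentiate to $g'(x)=C(\lambda x^{s_0}+1)^{-(s_0+1)/s_0}$, exactly the claimed extremal family.

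However, the two steps you flag as delicate are genuine gaps that the sketch leaves open. For existence: replacing a maximizing sequence by decreasing rearrangements is fine (the Hardy--Littlewood majorization $\int_0^x f\leqs\int_0^x f^*$ gives $g\leqs g^*$ pointwise, and $J_0$ is rearrangement-invariant), and dilation normalization is natural, but weak $L^{p_0}$ compactness of the rescaled sequence does not by itself give convergence of $I_0$, since $I_0$ is not weakly continuous; you still need a concentration-compactness or tightness argument to rule out loss of mass. For optimality: you derive a critical point of the Euler--Lagrange equation but do not establish that it is the global maximizer rather than merely a stationary point; Bliss's own 1930 argument is a classical sufficiency argument in the calculus of variations, and some substitute for it (convexity/duality, or a direct comparison) is needed before the explicit family can be declared extremal. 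As written, this is a credible plan rather than a complete proof, which is consistent with the paper's choice to cite the result rather than reprove it.
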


By setting $t=r^{(2k-n)/k}$, one can directly compute
\begin{gather*}
\int_0^{\infty}r^{n-k}[u'(r)]^{k+1}dr=C_{n,k}\int_0^{\infty}|u'(t)|^{k+1}dt,\\
\int_0^{\infty}r^{n-1+2sk}|u(r)|^{k^*}dr=C_{n,k}'\int_0^{\infty}t^{-k^*k/(k+1)-1}|u(t)|^{k^*}dt.
\end{gather*}
Note that $k^*>k+1$ holds when $-1<s\leqs0$. Using the above lemma, we can deduce that if $\int_0^{\infty}|u'(t)|^{k+1}dt$ is given, then $\int_0^{\infty}t^{-k^*k/(k+1)-1}|u(t)|^{k^*}dt$ attains its maximum value when $u$ satisfies
\begin{align*}
|u'(t)|=(\lambda t^{s_0}+1)^{-(s_0+1)/s_0},
\end{align*}
where $s_0=k^*/(k+1)-1=2k(s+1)/(n-2k)$. Since $u\in\Phi_0^k(\mathbb{R}^n)$, we have $u\leqs0$ and $u(t)\big|_{t=0}=u(r)\big|_{r=\infty}=0$. Hence,
\begin{align}
u(t)=-\int_0^t|u'(\tau)|d\tau=-(\lambda+t^{-s_0})^{-1/s_0}.
\label{formu317}
\end{align}
By putting $t=r^{(2k-n)/k}$ into the equality \eqref{formu317}, we conclude that the best constant is attained at the function
\begin{align*}
u(x)=-(\lambda+|x|^{2(s+1)})^{(2k-n)/2k(s+1)}.
\end{align*}
This theorem is finally proved.  \hfill $\square$

\begin{corollary}\label{cor31}
Let $\Omega\subset\mathbb{R}^n$ be any $(k-1)$-convex bounded domain containing the origin. Suppose $1\leqs k\leqs n$, $s>-s_0$ with $s_0=\min(1,n/2k)$. Then it holds for all $u\in\Phi_0^k(\Omega)$,
\begin{align}
\Vert u\Vert_{L^p(\Omega;|x|^{2sk})}\leqs C\Vert u\Vert_{\Phi_0^k(\Omega)}\quad\text{for }p\in[1,k^*],
\label{formu318}
\end{align}
where the constant $C$ depends only on $n,k,s,\Omega$ and $p$. Here, $k^*=k^*(s)$ is given by \eqref{formu18}.
\end{corollary}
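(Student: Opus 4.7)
The plan is to reduce Corollary \ref{cor31} to the two endpoint inequalities already in hand---Theorem \ref{thm11} and Wang's Sobolev inequality \eqref{formu11}---by a case analysis keyed to the piecewise definition \eqref{formu18} of $k^*$, combined with H\"older's inequality, which I use both to decouple the weight $|x|^{2sk}$ from $|u|$ when the weight is singular at the origin, and to pass from the critical exponent down to arbitrary $p\in[1,k^*]$.

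First I settle \eqref{formu318} at the critical exponent in each regime. When $2k<n$ and $-1<s\leqs 0$, with $k^*=(k+1)(n+2sk)/(n-2k)$, this is precisely Theorem \ref{thm11}. When $2k<n$ and $s>0$, the weight $|x|^{2sk}$ is uniformly bounded on the bounded domain $\Omega$, so \eqref{formu318} at $p=k^*=(k+1)n/(n-2k)$ reduces to \eqref{formu11} at the standard Sobolev exponent. When $2k>n$, the embedding $\Phi_0^k(\Omega)\hookrightarrow L^\infty(\Omega)$ supplied by \eqref{formu11} (with $k^*=\infty$) gives
$$\int_\Omega |x|^{2sk}|u|^p\,dx\leqs \Vert u\Vert_{L^\infty(\Omega)}^p\int_\Omega|x|^{2sk}\,dx,$$
and the hypothesis $s>-s_0=-n/(2k)$ makes the last integral finite. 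The only case needing an extra step is $2k=n$ with $-1<s<0$, where I apply H\"older: for any target finite exponent $p$ I pick $q>1$ so close to $1$ that $2skq>-n$ (permitted by $s>-1=-s_0$) and write
$$\int_\Omega|x|^{2sk}|u|^p\,dx\leqs\left(\int_\Omega|x|^{2skq}\,dx\right)^{1/q}\left(\int_\Omega|u|^{pq'}\,dx\right)^{1/q'},$$
the second factor being bounded via \eqref{formu11} at the finite exponent $pq'$, which is admissible since $k^*$ is unrestricted in this regime. For $2k=n$ and $s\geqs0$ the weight is bounded and \eqref{formu11} applies directly.

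Finally, in the cases with $k^*<\infty$, I pass from $p=k^*$ down to $p\in[1,k^*)$ by H\"older's inequality with respect to the finite measure $d\mu=|x|^{2sk}\,dx$:
$$\Vert u\Vert_{L^p(\Omega;|x|^{2sk})}\leqs \mu(\Omega)^{1/p-1/k^*}\Vert u\Vert_{L^{k^*}(\Omega;|x|^{2sk})}.$$
The finiteness $\mu(\Omega)<\infty$ reduces to $2sk>-n$, which a case-by-case check shows is guaranteed by $s>-s_0$ in each of the four regimes ($s_0=1$ when $2k\leqs n$ and $s_0=n/(2k)$ when $2k>n$). The proof is essentially bookkeeping; the main care point---and the only mildly delicate step---is verifying in each regime that the weight has exactly the right integrability so that H\"older can be applied without stepping outside the range of validity of Theorem \ref{thm11} or \eqref{formu11}.
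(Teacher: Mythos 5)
Your proof is correct and follows essentially the same route as the paper: dispose of $s\geqs0$ by boundedness of the weight plus \eqref{formu11}, use Theorem \ref{thm11} when $2k<n$ and $s\leqs0$, bound the weight's $L^1$ norm via $s>-n/2k$ when $2k>n$, and apply H\"older with exponent near $1$ (the paper parametrizes this via a small $\epsilon<0$ in the weight exponent, you via a conjugate pair $q,q'$ with $q\to1^+$, but these are the same device) when $2k=n$; you also make explicit the final H\"older step from $p=k^*$ down to $p\in[1,k^*)$, which the paper leaves implicit.
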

\begin{proof}
We only consider the case $s<0$. Otherwise, $|x|^{2sk}\leqs(diam(\Omega))^{2sk}<\infty$ for $s\geqs0$. Then, the inequality \eqref{formu318} follows directly by the Hessian Sobolev inequality \eqref{formu11}.

We next consider three sub-cases separately. For $2k<n$, the inequality \eqref{formu318} is an easy consequence of Theorem \ref{thm11} and H\"older's inequality, since $\Omega$ is bounded. For $2k>n$, we have $\int_\Omega|x|^{2sk}dx\leqs M<\infty$ by $s>-n/2k$, and thus for any $1\leqs p\leqs\infty$,
\begin{align*}
\left(\int_\Omega|x|^{2sk}|u|^pdx\right)^{1/p}\leqs M^{1/p}\Vert u\Vert_{L^\infty(\Omega)}\leqs C\Vert u\Vert_{\Phi_0^k(\Omega)}.
\end{align*}
Finally for $2k=n$, since $-1<s<0$, there exists a constant $\epsilon<0$ such that $-1<s+\epsilon<0$. Then, we have $\int_\Omega|x|^{2(s+\epsilon)k}dx\leqs\widetilde{M}<\infty$, and hence for any $1\leqs p<\infty$,
\begin{align*}
\left(\int_\Omega|x|^{2sk}|u|^pdx\right)^{1/p}&\leqs\left(\int_\Omega|x|^{2(s+\epsilon)k}dx\right)^{s/p(s+\epsilon)}\left(\int_\Omega|u|^{p(s+\epsilon)/\epsilon}dx\right)^{\epsilon/p(s+\epsilon)}\\
&\leqs \widetilde{M}^{s/p(s+\epsilon)}\Vert u\Vert_{L^{p(s+\epsilon)/\epsilon}(\Omega)}\leqs \widetilde{C}\Vert u\Vert_{\Phi_0^k(\Omega)}.
\end{align*}
Note that we use \eqref{formu11} to yield the last inequality. This finishes the proof.
\end{proof}

\section{The Sublinear Case}
In this section, we deal with the variational problem for the sublinear case. Before that, we introduce the $L^\infty$-estimate for solutions of
\begin{equation}
\left\{
\begin{array}{ll}
 S_k(D^2u)=(|x|^2+\delta^2)^{sk}f(x,u)  & \text{in }\Omega, \\
 u=0  & \text{on }\partial\Omega,
\end{array}
\right.
\label{formu41}
\end{equation}
where $0<\delta<1$ and $f(x,z)$ satisfies \eqref{formu115}. That is, there exist $\theta>0$, $K>0$ such that
\begin{align}
f(x,z)\leqs K+(\lambda_1-\theta)|z|^k \quad\text{for }z\leqs0.
\label{formu42}
\end{align}

\begin{lemma}\label{lem41}
Consider \eqref{formu41} where $s>-s_0$ for $s_0=\min(1,n/2k)$ and \eqref{formu42} holds. Then for any admissible solution $u$ of \eqref{formu41}, it holds
\begin{align*}
\Vert u\Vert_{L^\infty({\Omega})}\leqs M,
\end{align*}
where the constant $M>0$ depends only on $n,k,s,\Omega$ and $\theta,K$ in \eqref{formu42}.
\end{lemma}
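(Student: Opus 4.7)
The plan is to use the weighted analog of the Chou--Wang energy method: test the equation with $-u$, invoke the sublinear growth \eqref{formu42} and the variational characterization \eqref{formu117} of $\lambda_1$ to establish a uniform bound on $\|u\|_{\Phi_0^k(\Omega)}$, and then upgrade this to the $L^\infty$ bound via a Moser-type iteration built from the Hessian Hardy--Sobolev inequality of Corollary \ref{cor31}.

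For the energy estimate, multiplying \eqref{formu41} by $-u$ and integrating gives
\[
\|u\|_{\Phi_0^k}^{k+1}=\int_\Omega(-u)(|x|^2+\delta^2)^{sk}f(x,u)\,dx\leqs K\!\int_\Omega(-u)(|x|^2+\delta^2)^{sk}dx+(\lambda_1-\theta)\!\int_\Omega(|x|^2+\delta^2)^{sk}|u|^{k+1}dx.
\]
When $s\leqs 0$, the pointwise comparison $(|x|^2+\delta^2)^{sk}\leqs|x|^{2sk}$ combined with the spectral inequality $\lambda_1\int_\Omega|x|^{2sk}|u|^{k+1}dx\leqs\|u\|_{\Phi_0^k}^{k+1}$ absorbs the critical-order term, leaving
\[
\tfrac{\theta}{\lambda_1}\|u\|_{\Phi_0^k}^{k+1}\leqs K\int_\Omega(-u)|x|^{2sk}dx.
\]
Since $s>-n/(2k)$ forces $\int_\Omega|x|^{2sk}dx$ to be finite, H\"older followed by another application of the spectral inequality bounds the right side by $C\|u\|_{\Phi_0^k}$, and we conclude $\|u\|_{\Phi_0^k(\Omega)}\leqs C$ uniformly in $\delta\in(0,1)$. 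For $s>0$ the weight $(|x|^2+\delta^2)^{sk}$ is uniformly bounded on $\overline\Omega$ by $(\textup{diam}(\Omega)^2+1)^{sk}$, and one would split the integrals according to the regions $\{|x|\leqs\delta\}$ and $\{|x|>\delta\}$, using $(|x|^2+\delta^2)^{sk}\leqs 2^{sk}\delta^{2sk}$ together with the Hessian Sobolev inequality \eqref{formu11} on the inner region and $(|x|^2+\delta^2)^{sk}\leqs 2^{sk}|x|^{2sk}$ together with the weighted spectral inequality on the outer one, again producing the same uniform control.

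With $\|u\|_{\Phi_0^k(\Omega)}\leqs C$ in hand, I would upgrade to the pointwise bound by Moser iteration: the equation reads $S_k(D^2u)=g$ with $|g|\leqs(|x|^2+\delta^2)^{sk}[K+(\lambda_1-\theta)|u|^k]$, and testing against powers $|u|^{\beta-1}u$ for successively larger $\beta$, while exploiting the divergence-form structure of $S_k$ and the embedding $\|u\|_{L^{k^*}(\Omega;|x|^{2sk})}\leqs C\|u\|_{\Phi_0^k}$ from Corollary \ref{cor31}, yields recursive weighted $L^p$ estimates that close at $L^\infty$ in finitely many steps. Equivalently, an Aleksandrov--Bakelman--Pucci-type maximum principle for $k$-admissible solutions can be invoked to bound $\|u\|_{L^\infty}$ directly by a suitable $L^p$-norm of $S_k(D^2u)^{1/k}$, which under \eqref{formu42} reduces to a quantity already bounded in Stage 1.

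The main obstacle is Stage 1 for the case $s>0$: the natural spectral inequality is keyed to the weight $|x|^{2sk}$, which is strictly smaller than $(|x|^2+\delta^2)^{sk}$, so the clean absorption available for $s\leqs 0$ breaks down. The splitting into $\{|x|\leqs\delta\}\cup\{|x|>\delta\}$ introduces a factor of $2^{sk}$ and a leftover integral of $|u|^{k+1}$ on the inner region, and one must verify carefully that the resulting leftover coefficient remains strictly smaller than $1$ uniformly in $\delta\in(0,1)$, by exploiting the smallness of $\delta^{2sk}$ and the geometry of $\Omega$ near the origin.
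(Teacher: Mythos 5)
Your proposal takes a genuinely different route from the paper. The paper proves Lemma \ref{lem41} by a blow-up and contradiction argument: if $M_m=\Vert u_m\Vert_{L^\infty}\to\infty$ along a sequence with $\delta_m\to0$, one sets $v_m=u_m/M_m$, observes that $v_m$ solves a rescaled equation whose right-hand side is controlled by \eqref{formu42}, and uses the a priori regularity of Theorem \ref{thm14} to pass to a nonzero limit $v$ that is a supersolution of $S_k(D^2u)=(\lambda_1-\theta)|x|^{2sk}|u|^k$. Taking $w=a\varphi_1$ as a subsolution (for $a$ large) and applying the method of sub- and supersolutions produces an admissible solution of that equation, contradicting the uniqueness of $\lambda_1$ in the eigenvalue problem \eqref{formu116}. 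This argument needs the compactness from Theorem \ref{thm14} and the spectral uniqueness theorem of \cite{HH25weighted}, but it bypasses the energy estimate and any ABP-type iteration entirely. You instead propose to obtain a uniform $\Phi_0^k$ bound by absorption through \eqref{formu117} and then upgrade to $L^\infty$ by Moser iteration or ABP. Comparing the two, the paper's argument is shorter and transfers the hard analysis to an already-proved compactness statement, whereas yours is more self-contained in spirit but requires substantially more analytic work to close.

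Two concrete gaps remain in your argument. First, your Stage~1 absorption for $s>0$: splitting at $\{|x|\leqs\delta\}$ and bounding $(|x|^2+\delta^2)^{sk}\leqs 2^{sk}|x|^{2sk}$ on the outer region introduces the \emph{fixed} multiplicative factor $2^{sk}$, so the coefficient after applying \eqref{formu117} becomes $2^{sk}(\lambda_1-\theta)/\lambda_1$, which need not be below $1$ --- the ``smallness of $\delta^{2sk}$'' you invoke helps only on the inner region and does not eliminate this factor. The correct fix is to split at a radius $\delta/\epsilon_1$ with a free small parameter $\epsilon_1$: on $\{|x|>\delta/\epsilon_1\}$ one has $(|x|^2+\delta^2)^{sk}\leqs(1+\epsilon_1^2)^{sk}|x|^{2sk}$ with the factor $(1+\epsilon_1^2)^{sk}$ arbitrarily close to $1$ once $\epsilon_1$ is chosen small depending only on $\theta,\lambda_1,s,k$, while the inner-region contribution is $O(\delta^{2sk}\epsilon_1^{-2sk})\Vert u\Vert_{\Phi_0^k}^{k+1}$ by \eqref{formu11} and vanishes as $\delta\to0$; for $\delta$ bounded away from zero one simply uses boundedness of the weight. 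Second, Stage~2 is not fleshed out: Moser iteration for the fully nonlinear $k$-Hessian operator is not a standard off-the-shelf tool, and the ABP-type estimate of \cite[Theorem 2.1]{CW01variational} --- which is what the paper actually uses elsewhere --- requires the right-hand side in $L^\sigma$ with $\sigma>n/(2k)$. Starting from $\Vert u\Vert_{\Phi_0^k}\leqs C$ and the bound $f(x,u)\leqs K+(\lambda_1-\theta)|u|^k$, one needs $\Vert u\Vert_{L^{k\sigma}}$ with $k\sigma>n/2$, which exceeds the unweighted Hessian--Sobolev exponent $k^\star$ as soon as $n\geqs 4k+2$, so the estimate does not close in a single ABP application and a genuine bootstrap would be required, with the weight $|x|^{2sk\sigma}$ introducing additional integrability constraints when $s<0$. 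These are fixable in principle because the nonlinearity is strictly subcritical, but as written the step is a placeholder rather than a proof.

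Note also that the paper's route sidesteps both issues: the normalization makes $|v_m|\leqs1$ trivially, so no $\Phi_0^k$-bound or bootstrap is needed, and the limiting equation is handled by the structure of the eigenvalue problem rather than by elliptic estimates.
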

\begin{proof}
Suppose on the contrary that there is a sequence of $\{\delta_m\}\to0$ and $\{f_m\}$ such that the equation \eqref{formu41} for $\delta=\delta_m, f=f_m$ has a solution $u_m\in\Phi_0^k(\Omega)$ satisfying
\begin{align*}
M_m=\Vert u_m\Vert_{L^\infty({\Omega})}\to\infty\quad\text{as }m\to\infty.
\end{align*}
Denote $v_m=u_m/M_m$. Then $v_m$ satisfies
\begin{align*}
S_k(D^2v_m)=(|x|^2+\delta_m^2)^{sk}M_m^{-k}f_m(x,M_mv_m).
\end{align*}
Using \eqref{formu42} and Theorem \ref{thm14}, $v_m$ subconverges to a nonzero function $v\in\Upsilon(\Omega)$, which is a supersolution of
\begin{align}
S_k(D^2u)=(\lambda_1-\theta)|x|^{2sk}|u|^k.
\label{formu43}
\end{align}
On the other hand, let $a>1$ be sufficiently large such that $w=a\varphi_1<v$ in $\Omega$, where $\varphi_1\in\Upsilon(\Omega)$ is the eigenfunction of \eqref{formu116}. Hence, $w$ and $v$ are, respectively, a subsolution and a supersolution of \eqref{formu43}. By the method of subsolution and supersolution, we obtain an admissible solution $\varphi^*\in\Upsilon(\Omega)$ of \eqref{formu43} with $w\leqs\varphi^*\leqs v$. This contradicts the uniqueness result for the eigenvalue problem \eqref{formu116}, see \cite{HH25weighted}. This completes the proof.
\end{proof}

Recall the functional $J$
\begin{align*}
J(u)=\int_\Omega\frac{(-u)S_k(D^2u)}{k+1}dx-\int_\Omega F(x,u)dx,
\end{align*}
where $F(x,z)=\int_z^0|x|^{2sk}f(x,\tau)d\tau$. Assume that $f$ satisfies \eqref{formu115}, then there exist $\theta_1>0$ and $K_1>0$ such that
\begin{align*}
F(x,z)\leqs K_1+\frac{(1-\theta_1)\lambda_1}{k+1}|x|^{2sk}|z|^{k+1}.
\end{align*}
According to \eqref{formu117}, we obtain a lower bound estimate for $J$
\begin{align}
J(u)\geqs\frac{\theta_1}{k+1}\int_\Omega(-u)S_k(D^2u)dx-K_1|\Omega|.
\label{formu44}
\end{align}

The main result in this section is as follows, which contains Theorem \ref{thm13}.
\begin{thm}\label{thm41}
Let $\Omega$ be a strictly $(k-1)$-convex bounded domain containing the origin with the boundary $\partial\Omega\in C^{3,1}$. Let $s>-s_0$ for $s_0=\min(1,n/2k)$ and $f^{1/k}\in C^{1,1}(\overline{\Omega}\times\mathbb{R})$. Suppose $f(x,z)>0$ for $z<0$ such that \eqref{formu114} and \eqref{formu115} holds uniformly in $\overline{\Omega}$. Then the problem \eqref{formu17} has a nontrivial admissible solution $u\in\Upsilon(\Omega)$, which is a minimizer of the functional $J$ over $\Phi_0^k(\Omega)$.
\end{thm}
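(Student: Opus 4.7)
The plan is to follow the regularization plus descent gradient flow strategy of Step 1 in the proof of Theorem \ref{thm11} and of \cite{CW01variational}: I first replace the weight $|x|^{2sk}$ by $w_\delta(x):=(|x|^2+\delta^2)^{sk}$ to obtain a regularized functional $J_\delta$, produce an approximate minimizer of $J_\delta$ via a descent flow, and pass to the limit $\delta\to 0^+$ using the regularity estimates of Theorem \ref{thm14}.

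Two properties of $J_\delta$, uniform in small $\delta$, need to be established first. For coercivity and lower boundedness, \eqref{formu115} gives $\theta_1,K_1>0$ with
\begin{align*}
F_\delta(x,z):=\int_z^0 w_\delta(x)f(x,\tau)\,d\tau \leqs K_1+\frac{(1-\theta_1)\lambda_1}{k+1}\,w_\delta(x)|z|^{k+1};
\end{align*}
since $w_\delta\leqs|x|^{2sk}$ when $s\leqs0$ and $w_\delta$ is uniformly bounded when $s>0$, combining with the spectral inequality \eqref{formu117} yields the analog of \eqref{formu44}, namely $J_\delta(u)\geqs\tfrac{\theta_1}{k+1}I_k(u)-C$, uniformly in small $\delta$. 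For the negativity of $\inf J_\delta$, \eqref{formu114} gives $\eta,r_0>0$ with $f(x,z)\geqs(\lambda_1+\eta)|z|^k$ for $-r_0<z<0$. Testing with the small admissible function $u_0=\epsilon\varphi_1$ (where $\varphi_1<0$ is the eigenfunction of \eqref{formu116} normalized by $\int|x|^{2sk}|\varphi_1|^{k+1}dx=1$, and $\epsilon>0$ small), and using $\int_\Omega w_\delta|\varphi_1|^{k+1}dx\to 1$ as $\delta\to 0$ by dominated convergence, one obtains $J_\delta(u_0)\leqs -c_0\epsilon^{k+1}<0$ uniformly in small $\delta$.

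Next I run the descent gradient flow of $J_\delta$,
\begin{align*}
\log S_k(D^2u)-u_t=\log[w_\delta(x)f(x,u)]\quad\text{in }\Omega\times(0,\infty),
\end{align*}
with zero boundary data and smooth initial data drawn from a minimizing sequence for $J_\delta$ (perturbed slightly to satisfy the compatibility condition \eqref{formu24}). The computation \eqref{formu311} shows $J_\delta$ is nonincreasing along the flow. By Lemma \ref{lem41} the flow is uniformly $L^\infty$-bounded, and then Lemmas \ref{lem21}--\ref{lem22} (with $\mu$ as in \eqref{formu23}) give a time-uniform $C^{3+\alpha,1+\alpha/2}(\overline Q)$ estimate. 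Extracting $t_j\to\infty$ along which $\tfrac{d}{dt}J_\delta(u(\cdot,t_j))\to 0$ and applying Arzel\`a-Ascoli produces an admissible stationary solution $u_\delta\in\Phi_0^k(\Omega)$ of the regularized elliptic problem with $J_\delta(u_\delta)\leqs\inf J_\delta+o(1)$.

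Finally I pass to the limit $\delta\to 0^+$. Lemma \ref{lem41} provides a uniform $L^\infty$ bound on $u_\delta$, and Theorem \ref{thm14} then furnishes the estimates defining $\Upsilon(\Omega)$, independent of $\delta$ since they depend only on $n,k,s,\alpha,f$ and $\|u_\delta\|_\infty$. A diagonal extraction yields $u\in\Upsilon(\Omega)$ solving \eqref{formu17}. To show $u$ minimizes $J$, for any smooth test $v\in\Phi_0^k(\Omega)$ I have $J_\delta(u_\delta)\leqs J_\delta(v)+o(1)$; dominated convergence (valid because $|x|^{2sk}\in L^1(\Omega)$ thanks to $s>-s_0$) gives $J_\delta(v)\to J(v)$ and $\liminf_{\delta\to 0}J_\delta(u_\delta)\geqs J(u)$, whence $J(u)\leqs J(v)$, and a density argument finishes. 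Non-triviality is automatic, since $J(u)\leqs -c_0<0=J(0)$. The main obstacle will be this final passage to the limit: one must simultaneously control the convergence of the Hessian energy $\int(-u_\delta)S_k(D^2u_\delta)\,dx$ and of the weighted nonlinear term $\int w_\delta F(x,u_\delta)\,dx$, and the lower-semicontinuity of the Hessian functional on the $C^{1,1}$-convergent sequence $\{u_\delta\}$ (only away from the origin when $s<0$) must be verified carefully in this weighted singular setting.
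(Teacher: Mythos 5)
Your overall strategy is the same regularization--descent-flow--limit scheme as the paper, but you regularize \emph{only the weight} $|x|^{2sk}\to(|x|^2+\delta^2)^{sk}$ and leave $f$ untouched. This creates a genuine gap in the flow step. The paper instead builds a double approximation $f_m^{1/k}=\hat f_m^{1/k}+1/m$, where $\hat f_m$ agrees with $f$ for $|z|<m$ and is \emph{constant} for $|z|>2m$, together with the weight $(|x|^2+m^{-2})^{sk}$. This truncation is not cosmetic: it is what makes $\log\psi_m$ and its derivatives up to second order \emph{uniformly bounded over all of $\overline Q_T\times\mathbb{R}$}, which is exactly the hypothesis Lemma~\ref{lem21} needs to deliver the time-uniform $L^\infty$ and $C^{3+\alpha,1+\alpha/2}(\overline Q)$ estimates along the flow. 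Without truncating $f$, your $g=\log[w_\delta f]$ can grow like $k\log|u|$ as $u\to-\infty$ and tend to $-\infty$ as $u\to 0^-$ (only $f(x,z)>0$ for $z<0$ is assumed, not $f(x,0)>0$), so the uniform-in-$T$ conclusions of Lemma~\ref{lem21} are simply not available, and the flow's global regularity is not established. The addition of $1/m$ to $f^{1/k}$ is precisely what keeps $\log\psi_m$ bounded below near $u=0$.

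The specific claim ``By Lemma~\ref{lem41} the flow is uniformly $L^\infty$-bounded'' is a misuse of that lemma. Lemma~\ref{lem41} is proved by a blow-up/eigenvalue argument for admissible solutions of the \emph{elliptic} problem~\eqref{formu41}; it says nothing about $u(\cdot,t)$ while the flow is running, since those are not stationary. The paper applies Lemma~\ref{lem41} only to the elliptic limit $u_m$ produced \emph{after} the flow has converged, to get a bound on $\|u_m\|_{L^\infty}$ that is uniform in $m$. The $L^\infty$ bound \emph{during} the flow, by contrast, comes from the boundedness of $\psi_m$ via Lemma~\ref{lem21}. Once you have the elliptic family $\{u_m\}$ with a uniform $L^\infty$ bound, the passage to the limit via Theorem~\ref{thm14}, and the identification $J(u)=\lim_m J_m(u_m)=\inf J$, go through as you describe; those parts of your outline match the paper. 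What is missing is the truncation $f\rightsquigarrow f_m$ (and the verification that $f_m$ still satisfies \eqref{formu42} with constants independent of $m$), which is the ingredient that makes the flow step work at all.
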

\begin{proof}
For $m\in\mathbb{N}$, let $\hat{f}_m$ such that $\hat{f}_m^{1/k}\in C^{1,1}(\overline{\Omega}\times\mathbb{R})$ and
\begin{align*}
\hat{f}_m(x,z)=f(x,z) \quad\text{for }|z|<m,\quad\hat{f}_m(x,z)=f(x,-2m) \quad\text{for }|z|>2m.
\end{align*}
Let $f_m^{1/k}=\hat{f}_m^{1/k}+1/m$. Consider the functional $J=J_m$ 
\begin{align*}
J_m(u)=\int_\Omega\frac{(-u)S_k(D^2u)}{k+1}dx-\int_\Omega F_m(x,u)dx,
\end{align*}
where $F_m(x,z)=\int_z^0(|x|^2+m^{-2})^{sk} f_m(x,\tau)d\tau$. Similar to \eqref{formu44}, we have $J_m(u)\geqs-K_2$ with some $K_2$ independent of $m$ large. On the other hand, since \eqref{formu114} we can take $a>0$ sufficiently small such that for the eigenfunction $\varphi_1$ of \eqref{formu116},
\begin{align*}
f_m(x,z)\geqs f(x,z)\geqs (\lambda_1+\tilde\theta)|z|^k\quad\text{holds in }\{z:a\varphi_1<z<0\},
\end{align*}
with a constant $\tilde\theta>0$. Hence, for $m$ sufficiently large,
\begin{align*}
J_m(a\varphi_1)&=\int_\Omega\frac{(-a\varphi_1) S_k(aD^2\varphi_1)}{k+1}dx-\int_\Omega F_m(x,a\varphi_1)dx \\
&\leqs\int_\Omega\frac{\lambda_1|x|^{2sk}|a\varphi_1|^{k+1}}{k+1}dx-\int_\Omega\frac{\lambda_1+\tilde\theta}{k+1}(|x|^2+m^{-2})^{sk}|a\varphi_1|^{k+1}dx \\
&\leqs\int_\Omega\frac{-\tilde\theta|x|^{2sk}|a\varphi_1|^{k+1}}{k+1}dx+o(1)\leqs -c_0<0,
\end{align*}
for some $c_0$ independent of $m$. This illustrates that $\inf_{\Phi_0^k(\Omega)}J_m\leqs-c_0<0$ for $m$ large.

Consider the parabolic equation
\begin{equation}
\left\{
\begin{array}{ll}
\log S_k(D^2u)-u_t=\log\psi_m(x,u) \quad\text{in }Q:=\Omega\times(0,\infty),  \\
u=u_0\quad\text{on }\{t=0\},\qquad u=0\quad\text{on }\partial\Omega\times[0,\infty),
\end{array}
\right.
\label{formu45}
\end{equation}
where $\psi_m(x,u)=(|x|^2+m^{-2})^{sk}f_m(x,u)$ and $u_0\in\Phi_0^k(\Omega)$ satisfies $J_m(u_0)<\inf_{\Phi_0^k(\Omega)}J_m+\varepsilon_m$ with $0<\varepsilon_m\to0^+$ as $m\to\infty$. Using a similar statement as in Section 3, we can assume that $u_0$ satisfies the compatibility condition $S_k(D^2u_0)=\psi_m(x,u_0)$ on $\partial\Omega\times\{t=0\}$. Notice that $\log\psi_m$ and its derivatives up to second order are uniformly bounded, which might depend on $m$ but not on time $t$. Thus, applying Lemma \ref{lem21} to \eqref{formu45}, we obtain a global admissible solution $u$ satisfying $\Vert u\Vert_{C^{3+\alpha,1+\alpha/2}(\overline{Q})}\leqs C$ with $C$ independent of $t$. Moreover, similar to \eqref{formu311}, $u(\cdot,t)$ is a descent gradient flow of $J_m$, namely
\begin{align*}
\frac{d}{dt}J_m(u(\cdot,t))=-\int_\Omega\big(S_k(D^2u)-\psi_m(x,u)\big)\log\frac{S_k(D^2u)}{\psi_m(x,u)}dx\leqs0.
\end{align*}
Since $J_m$ is bounded from below, there exists a sequence $\{t_j\}$ tending to $+\infty$ such that $(d/dt)J_m(u(\cdot,t_j))\to0$. Hence we can extract a subsequence of $\{u(\cdot,t_j)\}$ which converges in $C^3(\overline{\Omega})$ to a function $u_m\in\Phi_0^k(\Omega)$. Then, $u_m$ is a solution of \eqref{formu41} with $\delta=m^{-1}$ and $f$ replaced by $f_m$, and it follows $\inf_{\Phi_0^k(\Omega)}J_m\leqs J_m(u_m)\leqs\inf_{\Phi_0^k(\Omega)}J_m+\varepsilon_m$.

Since $f_m$ satisfies \eqref{formu42} with uniform $K>0$ and $\theta>0$, then by Lemma \ref{lem41} we have the uniform estimate $\Vert u_m\Vert_{L^\infty(\Omega)}\leqs M$. Therefore, by applying Theorem \ref{thm14} to $\{u_m\}$, we can obtain a subsequence of $\{u_m\}$ which converges to a solution $u\in\Upsilon(\Omega)$ of the problem \eqref{formu17}. Furthermore, by the definition of $J$ and $J_m$, we have by $\varepsilon_m\to0^+$,
\begin{align*}
-K_2\leqs\inf_{\Phi_0^k(\Omega)}J=\lim_{m\to\infty} \inf_{\Phi_0^k(\Omega)}J_m=\lim_{m\to\infty}J_m(u_m)=J(u)\leqs-c_0<0.
\end{align*}
Hence, we conclude that $u\neq0$ is a minimizer of the functional $J$ over $\Phi_0^k(\Omega)$.
\end{proof}

\section{The Superlinear Case}
In this section, we prove Theorem \ref{thm12}. Here we apply a slight modification to the proof in \cite{CW01variational}, so that the approximation $f_\delta$ of $f$ is suitable to Theorem \ref{thm14}.
\\[-0.5em]

\textbf{Proof of Theorem \ref{thm12}.} For clarity, we divide the proof into six steps.
\\
\textbf{Step 1.} Let us first assume $f$ satisfies a growth condition stronger than \eqref{formu111},
\begin{align}
\limsup_{z\to-\infty}\frac{f(x,z)}{|z|^p}<+\infty\quad\text{uniformly in }\overline{\Omega},
\label{formu51}
\end{align}
where $k<p<k^*-1$ is to be determined in Step 4.

For $0<\delta<1$, define $f_\delta$ by $f_\delta^{1/k}=f^{1/k}+\delta$. For convenience, we also denote $f_0=f$. Then, consider the approximation problem of \eqref{formu17}
\begin{equation}
\left\{
\begin{array}{ll}
 S_k(D^2u)=(|x|^2+\delta^2)^{sk}f_\delta(x,u)  & \text{in }\Omega, \\
 u=0  & \text{on }\partial\Omega,
\end{array}
\right.
\label{formu52}
\end{equation}
and its related functional $J_\delta$,
\begin{align*}
J_\delta(u)=\int_\Omega\frac{(-u)S_k(D^2u)}{k+1}dx-\int_\Omega F_\delta(x,u)dx,
\end{align*}
where $F_\delta(x,z)=\int_z^0(|x|^2+\delta^2)^{sk}f_\delta(x,\tau) d\tau$. Let $\tilde u_1\equiv0$ and $\tilde u_2=a\varphi_1$ for $a>1$ large. Then $J_0(\tilde u_1)=0$, and by \eqref{formu110} there exist $\theta_1>0$ and $C_1>0$ such that
\begin{align*}
J_0(\tilde u_2)&=\int_\Omega\frac{(-a\varphi_1) S_k(aD^2\varphi_1)}{k+1}dx-\int_\Omega F_0(x,a\varphi_1)dx \\
&\leqs\int_\Omega\frac{\lambda_1|x|^{2sk}|a\varphi_1|^{k+1}}{k+1}dx-\int_\Omega\frac{(\lambda_1+\theta_1) |x|^{2sk}|a\varphi_1|^{k+1}}{k+1}dx+C_1 \\
&\leqs-\frac{\theta_1}{k+1}\int_\Omega|x|^{2sk} |a\varphi_1|^{k+1}dx+C_1<-1,
\end{align*}
provided $a>1$ sufficiently large. Let $u_1$ and $u_2$ be smooth $k$-admissible functions close to $\tilde u_1$ and $\tilde u_2$, respectively, such that $S_k(D^2u_i)>0$ in $\overline{\Omega}$ for both $i=1,2$, and that $|J_0(u_1)|$ be sufficiently small and $J_0(u_2)<-1$. By the definition of $J_\delta$, we can obtain a small $\delta_0>0$ so that for $0<\delta<\delta_0$, $|J_\delta(u_1)|$ is small and $J_\delta(u_2)<-1$.

Denote by $\Gamma$ the set of paths in $\Phi_0^k(\Omega)$ connecting $u_1$ and $u_2$ continuously, namely,
\begin{align*}
\Gamma=\{\gamma\in C([0,1],&\Phi_0^k(\Omega)\cap C^{3,1}(\overline{\Omega})): \\
&\gamma(0)=u_1,\gamma(1)=u_2,S_k(D^2\gamma(\tau))>0\text{ in }\overline{\Omega}\text{ for }\tau\in[0,1]\}.
\end{align*}
Define the min-max value $c_\delta$ by
\begin{align}
c_\delta=\inf_{\gamma\in\Gamma}\sup_{\tau\in[0,1]}J_\delta(\gamma(\tau)).
\label{formu53}
\end{align}
Then it is easy to check that $c_0\geqs\limsup_{\delta\to0}c_\delta$.

Next, we derive a positive lower bound for $c_0$. Indeed, by \eqref{formu19} and \eqref{formu51}, there exist $\theta_2>0$ and $C>0$ such that
\begin{align}
f(x,z)\leqs\lambda_1(1-\theta_2)|z|^k+C|z|^p,
\label{formu54}
\end{align}
Since $k<p<k^*-1$, we have
\begin{align*}
J_0(u)&=\frac{1}{k+1}\Vert u\Vert_{\Phi_0^k(\Omega)}^{k+1}-\int_\Omega|x|^{2sk}\int_u^0f(x,z)dzdx \\
&\geqs\frac{1}{k+1}\Vert u\Vert_{\Phi_0^k(\Omega)}^{k+1}-\int_\Omega\left(\frac{\lambda_1(1-\theta_2)}{k+1}|x|^{2sk} |u|^{k+1}+\frac{C}{p+1}|x|^{2sk}|u|^{p+1}\right)dx \\
&\geqs\frac{\theta_2}{k+1}\Vert u\Vert_{\Phi_0^k(\Omega)}^{k+1}-C'\Vert u\Vert_{\Phi_0^k(\Omega)}^{p+1},
\end{align*}
where the last inequality follows by \eqref{formu117} and \eqref{formu318}. Hence, by taking a small $\sigma>0$, we have
\begin{align}
J_0(u)\geqs\frac{\theta_2}{2(k+1)}\sigma^{k+1}>0,\quad\text{for all }u\in\Phi_0^k(\Omega)\text{ with }\Vert u\Vert_{\Phi_0^k(\Omega)}=\sigma.
\label{formu55}
\end{align}

Let $u_1$ be sufficiently close to $\tilde u_1=0$ satisfying $\Vert u_1\Vert_{\Phi_0^k(\Omega)}<\sigma/2$. Then for any $\gamma\in\Gamma$, there must exist a $\tau_0\in(0,1)$ such that $\Vert\gamma(\tau_0)\Vert_{\Phi_0^k(\Omega)}=\sigma$. By \eqref{formu55}, we have
\begin{align*}
c_0\geqs\epsilon_0:=\frac{\theta_2}{2(k+1)}\sigma^{k+1}>0.
\end{align*}
Using a similar argument as above, we can show that there exists a $\delta_0>0$ small enough such that $c_\delta\geqs\epsilon_0/2>0$ holds for any $0<\delta<\delta_0$. We finally note that $c_0$ and $c_\delta$ have a uniform upper bound $C^*$ independent of $\delta$. Indeed, selecting a bounded path $\gamma\in\Gamma$, this follows by setting $C^*=\sup_{\tau\in[0,1]} \Vert\gamma(\tau)\Vert_{\Phi_0^k(\Omega)}^{k+1}<\infty$.
\\[0.5em]
\textbf{Step 2.} In this step, we prove that $c_\delta$ is a critical value of $J_\delta$ and there exists an admissible solution $u_\delta$ of \eqref{formu52} with $J_\delta(u_\delta)=c_\delta$.

For any $0<\varepsilon<\epsilon_0/4$, choose a $\gamma\in\Gamma$ that satisfies $\sup_{\tau\in[0,1]}J_\delta(\gamma(\tau))\leqs c_\delta+\varepsilon$. Then consider the parabolic problem
\begin{equation}
\left\{
\begin{array}{ll}
\mu(S_k(D^2u))-u_t=\mu(\psi_\delta(x,u)) \quad\text{in }Q:=\Omega\times(0,\infty),  \\
u=\gamma(\tau)\quad\text{on }\{t=0\},\qquad u=0\quad\text{on }\partial\Omega\times[0,\infty),
\end{array}
\right.
\label{formu56}
\end{equation}
where $\psi_\delta(x,u)=(|x|^2+\delta^2)^{sk}f_\delta(x,u)$ and $\mu$ is specified as in Section 2. More precisely, $\mu$ satisfies \eqref{formu23} with the exponent $p$ given in \eqref{formu51}, and it holds
\begin{align}
(a-b)(\mu(a)-\mu(b))\geqs(a-b)(a^{1/p}-b^{1/p})\quad\text{for }a,b>0.
\label{formu57}
\end{align}
We further assume that for every $\tau\in[0,1]$, $\gamma(\tau)$ satisfies the compatibility condition \eqref{formu24} $S_k(D^2\gamma(\tau))=\psi_\delta(x,\gamma(\tau))$ on $\partial\Omega\times\{t=0\}$. Observe that $\mu(\psi_\delta)$ satisfies the condition \eqref{formu25}, according to \eqref{formu51}. Hence, according to Lemma \ref{lem21}, there exists a global admissible solution $u^\tau(x,t)$ to \eqref{formu56}, for every $\tau\in[0,1]$.

Denote $\gamma^{t_0}(\tau)=u^\tau(\cdot,t_0)$ for any given $t_0\in(0,\infty)$. Then as discussed in \cite{Tso90functional}, we see that $\gamma^{t_0}$ is a path in $\Phi_0^k$. Let
\begin{align*}
\gamma_1=\{u^\tau(\cdot,t):\tau=0,0\leqs t\leqs t_0\},\quad \gamma_2=\{u^\tau(\cdot,t):\tau=1,0\leqs t\leqs t_0\}.
\end{align*}
Connect $\gamma_1,\gamma^{t_0}$ and $\gamma_2$ together so as to form a path in $\Gamma$, and denote it by $\tilde\gamma^{t_0}$.

Similar to \eqref{formu311}, we have
\begin{align}
\frac{d}{dt}J_\delta(u^\tau(\cdot,t))&=-\int_{\Omega}\big(S_k(D^2u^\tau)-\psi_\delta(x,u^\tau)\big)\partial_tu^\tau dx \nonumber\\
&=-\int_{\Omega}\big(S_k(D^2u^\tau)-\psi_\delta(x,u^\tau)\big)
\big(\mu(S_k(D^2u^\tau))-\mu(\psi_\delta(x,u^\tau))\big)dx\leqs0.
\label{formu58}
\end{align}
This illustrates that $u^\tau(\cdot,t)$ is a descent gradient flow of the functional $J_\delta$.

Set $I_t=\{\tau\in[0,1]:J_\delta(\gamma^t(\tau))\geqs c_\delta-\varepsilon\}$. Obviously $I_t$ is a closed subset of $[0,1]$, and $I_t\subset I_{t'}$ holds for any $t\geqs t'$, by virtue of \eqref{formu58}. Let $I_\infty=\cap_{t\geqs0}I_t$. We claim that $I_\infty$ is not empty. If it is not true, there exists a $t_0\in(0,\infty)$ such that $I_{t_0}=\varnothing$, i.e., $J_\delta(\gamma^{t_0}(\tau))<c_\delta-\varepsilon$ for all $\tau\in[0,1]$. Then we have $J_\delta(u)\leqs c_\delta-\varepsilon$ for all $u\in\tilde\gamma^{t_0}$, which contradicts the definition of $c_\delta$. Thus, there has at least one point $\tau_0\in I_\infty$.

In the next two steps, we will prove $|u^{\tau_0}(x,t)|\leqs M_0<\infty$ for all $t\geqs0$. Using Lemma \ref{lem21} again, we have the estimates $\Vert u^{\tau_0}(\cdot,t)\Vert_{C^{3+\alpha,1+\alpha/2}(\overline{Q})}\leqs C$ uniformly for $t\in(0,\infty)$.
Since $J_\delta(u^{\tau_0}(\cdot,t))$ is bounded from below, there exists a sequence $\{t_j\}\to\infty$ such that $(d/dt)J_\delta(u^{\tau_0}(\cdot,t_j))$ tends to $0$. Then we can extract a subsequence of $\{u^{\tau_0}(\cdot,t_j)\}$, which converges to a nontrivial solution $u_\delta$ of \eqref{formu52} with $c_\delta-\varepsilon\leqs J_\delta(u_\delta)\leqs c_\delta+\varepsilon$.
\\[0.5em]
\textbf{Step 3.} In the following, we will write $u^{\tau_0}$ as $u$, dropping the superscript $\tau_0$ for brevity. Recall that $c_\delta-\varepsilon\leqs J_\delta(u(\cdot,t))\leqs c_\delta+\varepsilon$ for all time $t$. Then denote the set
\begin{align*}
K^0=\left\{t\in(0,\infty):\frac{d}{dt}J_\delta(u(\cdot,t))<-\varepsilon\right\},
\end{align*}
and we have $\mathrm{mes}(K^0)\leqs2$. In this step, we will show that for any $t\not\in K^0$,
\begin{gather}
\int_\Omega(-u(\cdot,t))S_k(D^2u(\cdot,t))dx\leqs C, \label{formu59}\\
\int_\Omega F_\delta(\cdot,u(\cdot,t)) dx\leqs C, \label{formu510}
\end{gather}
where the constant $C$ is independent of $t, \varepsilon$ and $\delta$.

For $t\not\in K^0$, by \eqref{formu57} and \eqref{formu58} we have
\begin{align*}
\int_\Omega\big(S_k(D^2u)-\psi_\delta(x,u)\big)\big(S_k^{1/p}(D^2u)-\psi_\delta^{1/p}(x,u)\big)dx\leqs-\frac{d}{dt}J_\delta(u(\cdot,t))\leqs\varepsilon.
\end{align*}
Denote $\mathcal{A}=S_k^{1/p}(D^2u)$ and $\mathcal{B}=\psi_\delta^{1/p}(x,u)$. We obtain
\begin{align*}
\int_\Omega|\mathcal{A}-\mathcal{B}|^{p+1}dx\leqs C\int_\Omega|\mathcal{A}^p-\mathcal{B}^p||\mathcal{A}-\mathcal{B}|dx\leqs C\varepsilon.
\end{align*}
Hence, we have
\begin{align}
&\left|\int_\Omega u(\mathcal{A}^p-\mathcal{B}^p)dx\right| \nonumber\\
&\leqs C\int_\Omega|u||\mathcal{A}-\mathcal{B}|(\mathcal{A}^{p-1}+\mathcal{B}^{p-1})dx  \nonumber\\
&\leqs C\left(\int_\Omega|\mathcal{A}-\mathcal{B}|^{p+1}dx \right)^{\frac{1}{p+1}} \left(\int_\Omega|u|^{p+1}dx \right)^{\frac{1}{p(p+1)}} \left(\int_\Omega|u|(\mathcal{A}^p +\mathcal{B}^p)dx\right)^{\frac{p-1}{p}} \nonumber\\
&\leqs C\varepsilon^{1/(p+1)}\Vert u\Vert_{L^{p+1}(\Omega)}^{1/p} \left[\left(\int_\Omega|u|\mathcal{A}^pdx\right)^{\frac{p-1}{p}}+\left(\int_\Omega|u|\mathcal{B}^pdx\right)^{\frac{p-1}{p}}\right].  \label{formu511}
\end{align}
On the other hand, since $f_\delta^{1/k}=f^{1/k}+\delta$, we have
\begin{align}
F_\delta(x,u)&\leqs(|x|^2+\delta^2)^{sk}\int_u^0\Big[(1+\nu)f(x,z)+C_\nu\delta\Big]dz \nonumber\\
&\leqs(|x|^2+\delta^2)^{sk}\left[\frac{(1-\theta)(1+\nu)}{k+1}|u|f(x,u)+C_\nu\delta|u|+C\right] \nonumber\\
&\leqs \frac{(1-\theta)(1+\nu)}{k+1}|u|\psi_\delta(x,u)+(|x|^2+\delta^2)^{sk}\Big[C_\nu\delta|u|+C\Big],
\label{formu512}
\end{align}
where the second inequality follows by \eqref{formu112}. We can take $\nu>0$ sufficiently small so that $(1-\theta)(1+\nu)=1-\theta'$ for some $\theta'>\theta/2$. We still denote $\theta'$ by $\theta$ for simplicity. Therefore, 
\begin{align*}
J_\delta&(u)=\int_\Omega\frac{(-u)S_k(D^2u)}{k+1}dx-\int_\Omega F_\delta(x,u)dx  \\
&\geqs\int_\Omega\frac{(-u)S_k(D^2u)}{k+1}dx-\int_\Omega\left\{\frac{1-\theta}{k+1}|u|\psi_\delta(x,u)+(|x|^2+\delta^2)^{sk}\Big[C_\nu\delta|u|+C\Big]\right\}dx  \\
&\geqs\frac{1}{k+1}\int_\Omega|u|(\mathcal{A}^p-\mathcal{B}^p)dx+\frac{\theta}{k+1}\int_\Omega|u| \psi_\delta(x,u)dx-\int_\Omega(|x|^2+\delta^2)^{sk} \Big[C_\nu\delta|u|+C\Big]dx.
\end{align*}
Then using the inequalities \eqref{formu318} and \eqref{formu511}, we obtain (note that $k<p<k^*-1$)
\begin{align*}
&\int_\Omega|u|\psi_\delta(x,u)dx  \\
&\leqs C\left|\int_\Omega|u|(\mathcal{A}^p-\mathcal{B}^p)dx\right|+C\delta\int_\Omega (|x|^2+\delta^2)^{sk}|u|dx+C(1+J_\delta(u))  \\
&\leqs C\varepsilon^{1/(p+1)}\Vert u\Vert_{L^{p+1}(\Omega)}^{1/p} \left[\left(\int_\Omega|u|\mathcal{A}^pdx\right)^{\frac{p-1}{p}}+\left(\int_\Omega|u|\mathcal{B}^pdx\right)^{\frac{p-1}{p}}\right]+C\delta\Vert u\Vert_{\Phi_0^k(\Omega)}+C  \\
&\leqs C\varepsilon^{1/(p+1)}\left[\int_\Omega|u|\mathcal{A}^pdx +\left(\int_\Omega|u|\mathcal{A}^pdx\right)^{\frac{1}{p}}\left(\int_\Omega|u|\mathcal{B}^pdx\right)^{\frac{p-1}{p}}\right]+C\delta\Vert u\Vert_{\Phi_0^k(\Omega)}+C,
\end{align*}
where we use $J_\delta(u)\leqs c_\delta+\varepsilon\leqs C^*$ from Step 1. Hence, we have
\begin{align}
\int_\Omega|u|\psi_\delta(x,u)dx\leqs C\beta_{\varepsilon,\delta}\int_\Omega|u|\mathcal{A}^pdx+C,
\label{formu513}
\end{align}
where $\beta_{\varepsilon,\delta}\to0$ as $\varepsilon,\delta\to0$ and $C$ is independent of $\varepsilon,\delta$. Inserting the estimates \eqref{formu512}, \eqref{formu513} into $J_\delta(u)\leqs C^*$ and choosing $\varepsilon,\delta>0$ sufficiently small, we finally obtain \eqref{formu59} and \eqref{formu510} for any $t\not\in K^0$.
\\[0.5em]
\textbf{Step 4.} In this step, we prove the uniform $L^\infty$-bound for $u(\cdot,t)$ for $t\geqs0$. Denote $M_t=\sup_{\Omega}|u(\cdot,t)|$. Suppose on the contrary that there exists a sequence $\{t_j\}\to\infty$, such that $M_{t_j}\to\infty$ and
\begin{align}
M_{t_j}\geqs M_t\quad\text{for all }t\in[0,t_j].
\label{formu514}
\end{align}
Using the estimate \eqref{formu27} of $u_t$ and the assumption \eqref{formu514}, we have
\begin{align*}
M_{t}\geqs M_{t_j}e^{C_1(t-t_j)}\quad\text{for }t\leqs t_j.
\end{align*}
In particular, $M_t\geqs CM_{t_j}$ for $t\in[t_j-2,t_j]$. Since $K^0$ has a measure less than $2$, we can choose $t_j'\in[t_j-2,t_j]$ but $\not\in K^0$ and it satisfies $M_t\leqs CM_{t_j'}$ for all $t<t_j'$. Denote $M_j=M_{t_j'}$ for simplicity and we have $M_j\to\infty$ as $j\to\infty$.

Suppose that the maximum $M_j$ of $|u(\cdot,t_j')|$ is attained at a point $x_j\in\Omega$. By the global gradient estimate \eqref{formu26}, we have
\begin{align}
|u(x,t_j')|\geqs\frac{1}{2}M_j\quad \text{for }x\in B_{r_j}(x_j),
\label{formu515}
\end{align}
where $r_j=C_0M_j^\beta$ for $C_0>0$ independent of $j$, and $\beta=1-p/k=(k-p)/k$.

By \eqref{formu59} and the Hessian Sobolev inequality \eqref{formu11}, we have
\begin{align*}
\Vert u(\cdot,t_j')\Vert_{L^q(B_{r_j}(x_j))}\leqs\Vert u(\cdot,t_j')\Vert_{L^q(\Omega)}\leqs C\Vert u(\cdot,t_j')\Vert_{\Phi_0^k(\Omega)}\leqs C,
\end{align*}
where $1\leqs q\leqs k^{\star}$, and $k^{\star}$ is the Sobolev exponent of $S_k$.
On the other hand, by \eqref{formu515}
\begin{align*}
\Vert u(\cdot,t_j')\Vert_{L^q(B_{r_j}(x_j))}^q\geqs CM_j^qr_j^n\geqs CM_j^{q+n\beta}.
\end{align*}
Assuming $q+n\beta>0$ for a moment, we then reach a contradiction when $M_j\to\infty$. Therefore, we obtain $\sup_{\Omega}|u(\cdot,t)|\leqs M_0<+\infty$ for all $t\geqs0$. 

Finally, we need to select suitable $p$ and $q$ that satisfy all hypotheses. When $k\geqs n/2$, we fix any $p\in(k,+\infty)$ and then let $q$ be large enough so that $q+n\beta=q+n(k-p)/k>0$. When $k<n/2$, we take $q=k^{\star}=n(k+1)/(n-2k)$ and choose a $p$ satsifying
\begin{align*}
k<p<\min\left\{k^*-1,\frac{k(n+1-k)}{n-2k}\right\}=\min\left\{\frac{k(n+2+2s+2sk)}{n-2k},\frac{k(n+1-k)}{n-2k}\right\}.
\end{align*}
By direct computation, we have $q+n\beta>0$. We note that in the proof we always select a $p$ satisfying the above condition, and we will deal with the general situation in the last step.
\\[0.5em]
\textbf{Step 5.} By Step $2\sim4$, we have obtained a solution $u_\delta$ of \eqref{formu52} with $c_\delta-\varepsilon\leqs J_\delta(u_\delta)\leqs c_\delta+\varepsilon$, and it satisfies
\begin{align}
\int_\Omega(-u_\delta)S_k(D^2u_\delta)dx\leqs C,
\label{formu516}
\end{align}
for some $C>0$ independent of $\delta,\varepsilon$. In this step, we show that $M_\delta=\sup_\Omega|u_\delta(x)|$ is uniformly bounded and thus $u_\delta$ subconverges to a solution $u$ of \eqref{formu17}.

When $k>n/2$, this is an easy consequence by combining \eqref{formu516} and the Hessian Sobolev embedding \eqref{formu11}. When $k=n/2$, by H\"older's inequality we obtain
\begin{align*}
\big\Vert\psi_\delta(x,u_\delta)\big\Vert_{L^\sigma(\Omega)}\leqs C\big\Vert|x|^{2sk}\big\Vert_{L^{\sigma^2}(\Omega)}\Big\Vert f_\delta(x,u_\delta)\Big\Vert_{L^{\frac {\sigma^2}{\sigma-1}}(\Omega)}\leqs C\Big\Vert f_\delta(x,u_\delta)\Big\Vert_ {L^{\frac{\sigma^2}{\sigma-1}}(\Omega)},
\end{align*}
where we set $\sigma>1$ sufficiently close to 1 so that $s\sigma^2>-1$. We claim that the last term has a uniform bound independent of $\delta,\varepsilon$. Indeed, by \eqref{formu111} we have
\begin{align*}
\log f_\delta(x,u_\delta)\leqs C_{\epsilon}+\epsilon|u_\delta|^{(n+2)/n},
\end{align*}
for any $\epsilon>0$. Denote $q=\sigma^2/(\sigma-1)$. By Moser-Trudinger inequality \eqref{formu12}, we obtain
\begin{align*}
\Vert f_\delta(x,u_\delta)\Vert_{L^q(\Omega)}^q&\leqs \int_\Omega\exp\left[{q\big(C_\epsilon+\epsilon|u_\delta|^{(n+2)/n}\big)}\right]dx  \\
&\leqs\widetilde C_\epsilon\int_\Omega\exp\bigg[\alpha_n\Big( \frac{|u_\delta|}{\Vert u_\delta\Vert_{\Phi_0^k(\Omega)}}\Big) ^{(n+2)/n}\bigg]dx\leqs C\widetilde C_\epsilon,
\end{align*}
where $\epsilon>0$ is taken sufficiently small such that $q\epsilon\leqs\alpha_n\Vert u_\delta\Vert_{\Phi_0^k(\Omega)}^{-(n+2)/n}$ holds uniformly for $\delta,\varepsilon>0$, by virtue of \eqref{formu516}. Hence, we have the uniform estimate $\Vert\psi_\delta(x,u_\delta)\Vert_{L^\sigma(\Omega)}\leqs C$. Note that $\sigma>1$, then by applying the $L^\infty$-estimate in \cite[Theorem 2.1]{CW01variational} to \eqref{formu52}, we derive a uniform $L^\infty$-bound for $u_\delta$.

When $k<n/2$, we make use of a rescaling method. We only consider the case $-1<s<0$; the case $s\geqs0$ can be settled by a similar argument. Suppose that $M_\delta$ tends to $\infty$. Denote
\begin{align*}
v_\delta(y)=M_\delta^{-1}u_\delta(R_\delta^{-1}y)\quad \text{in }D_\delta:=\{y:R_\delta^{-1}y\in\Omega\},
\end{align*}
where $R_\delta=M_\delta^{\beta_0}$ with $\beta_0=(k^*-1-k)/2k(1+s)$. Then $-1\leqs v_\delta\leqs0$ and $\inf_{D_\delta}v_\delta=-1$. Moreover, $v_\delta$ satisfies
\begin{align}
S_k(D^2v_\delta)=\widetilde\psi_\delta(y,v_\delta):=M_\delta^{-(k^*-1)}(|y|^2+R_\delta^2\delta^2)^{sk}f_\delta(M_\delta v_\delta)\quad \text{in }D_\delta.
\label{foru517}
\end{align}
By a direct calculation, it follows that
\begin{align*}
\int_{D_\delta}(-v_\delta)S_k(D^2v_\delta)dy=\int_\Omega(-u_\delta)S_k(D^2u_\delta)dx.
\end{align*}
Using the Hessian Sobolev inequality \eqref{formu11}, we have by \eqref{formu516}
\begin{align*}
\left(\int_{D_\delta}|v_\delta|^{k^\star}dy\right)^{1/k^\star}\leqs C_{n,k}\left(\int_{D_\delta}(-v_\delta)S_k(D^2v_\delta)dy \right)^{1/(k+1)}\leqs C.
\end{align*}
Denote $\widetilde D_\delta=D_\delta\cap\{v_\delta\leqs-\frac12\}$. Then we have $\text{mes}(\widetilde D_\delta)\leqs C_1$. Let $\sigma>n/2k$ be such that $0>2\sigma sk>-n$. Then it follows
\begin{align*}
\int_{\widetilde D_\delta}|y|^{2\sigma sk}dy\leqs \int_{\widetilde D_\delta\cap B_1}|y|^{2\sigma sk}dy+\int_{\widetilde D_\delta\cap B_1^c}|y|^{2\sigma sk}dy\leqs C.
\end{align*}
Therefore, we infer that $\Vert\widetilde\psi_\delta(y,v_\delta)\Vert_{L^\sigma(\widetilde D_\delta)}$ tends to $0$ as $M_\delta\to\infty$, since $|z|^{-(k^*-1)}f(x,z)$ converges uniformly to $0$ as $|z|\to\infty$. Applying the $L^\infty$-estimate in \cite[Theorem 2.1]{CW01variational} to $v_\delta+\frac12$ over the domain $\widetilde D_\delta$, we obtain $v_\delta\geqs-\frac34$ when $M_\delta$ is large enough, which leads to a contradiction to $\inf_{D_\delta}v_\delta=-1$.

Hence, $M_\delta$ is uniformly bounded. Then letting $\delta,\varepsilon\to0$ and using Theorem \ref{thm14}, we can obtain a subsequence of $\{u_\delta\}$, which converges to a solution $u\in\Upsilon(\Omega)$ of \eqref{formu17}.

Furthermore, by \eqref{formu516} we can only consider the path $\gamma\in\Gamma$ such that $\Vert\gamma(\cdot)\Vert_{\Phi_0^k(\Omega)}\leqs C$ with some constant $C$ independent of $\delta,\varepsilon$. Then by the inequality \eqref{formu318}, we have
\begin{align*}
\lim_{\delta\to0}J_\delta(\gamma(\tau))=J_0(\gamma(\tau))\quad \text{uniformly for }\tau\in[0,1].
\end{align*}
Thus, by the definition \eqref{formu53} of $c_\delta$, we derive $c_0=\lim_{\delta\to0}c_\delta$. Hence, we have
\begin{align*}
J_0(u)=\lim_{\delta,\varepsilon\to0}J_\delta(u_\delta)=\lim_{\delta\to0}c_\delta=c_0.
\end{align*}
That is, solution $u$ achieves the min-max critical value of $J_0$.
\\[0.5em]
\textbf{Step 6.} Finally, we remove assumption \eqref{formu51} by constructing feasible approximation functions. Given $f$ satisfying \eqref{formu19}$\sim$\eqref{formu112}, denote
\begin{align*}
f_m(x,z)=\left\{
\begin{array}{ll}
 f(x,z)  & \text{if }z>-m, \\
 d_m(x)|z|^p  & \text{if }z<-m,
\end{array}
\right.
\end{align*}
where $p$ is specified as in Step 4 and $d_m(x)=m^{-p}f(x,-m)$. We can also slightly modify $f_m$ at $z=-m$ if necessary. Clearly, $f_m$ satisfies \eqref{formu19}$\sim$\eqref{formu111} and \eqref{formu51}. For \eqref{formu112}, if $z<-m$,
\begin{align}
\int_z^0f_m(x,\tau)d\tau&=\int_{-m}^0f(x,\tau)d\tau+\int_z^{-m}d_m(x)|\tau|^pd\tau  \nonumber\\
&\leqs\frac{1-\theta}{k+1}mf(x,-m)+\frac{1}{p+1}d_m(x)(|z|^{p+1}-m^{p+1})  \nonumber\\
&\leqs\frac{1-\theta'}{k+1}|z|f_m(x,z),
\label{formu519}
\end{align}
with $\theta'>0$ depending only on $\theta$ and $p$. Then we can obtain a function $u_m\in\Upsilon(\Omega)$ that solves
\begin{align*}
S_k(D^2u)=|x|^{2sk}f_m(x,u):=\psi_m(x,u)\quad \text{in }\Omega.
\end{align*}
Moreover, $J_m(u_m)=c_m$, where
\begin{align}
J_m(u)=\int_\Omega\frac{(-u)S_k(D^2u)}{k+1}dx-\int_\Omega F_m(x,u)dx,
\label{formu520}
\end{align}
$F_m(x,z)=\int_z^0|x|^{2sk}f_m(x,\tau)d\tau$, and $c_m$ is the critical value of $J_m$ defined as in \eqref{formu53}. By a similar argument as in Step 1, we have uniform lower and upper positive bounds for $c_m$, i.e., $c'\leqs c_m\leqs c''$ for $c',c''>0$ independent of $m$.

We first show that there exists $C>0$ independent of $m$ such that
\begin{align}
\int_\Omega(-u_m)S_k(D^2u_m)dx\leqs C.
\label{formu521}
\end{align}
Indeed, combining \eqref{formu520} and
\begin{align*}
\int_\Omega\Big[(-u_m)S_k(D^2u_m)+u_m\psi_m(x,u_m)\Big]dx=0,
\end{align*}
we obtain
\begin{align*}
\int_\Omega\Big[\frac{(-u_m)\psi_m(x,u_m)}{k+1}-F_m(x,u_m)\Big]dx\leqs c''.
\end{align*}
Hence, we have by \eqref{formu519}
\begin{align*}
\int_\Omega|u_m|\psi_m(x,u_m)dx\leqs C.
\end{align*}
Inserting this inequality into \eqref{formu520}, we derive the uniform estimate \eqref{formu521}.

With \eqref{formu521} at hand, we can repeat the argument in Step 5 to obtain the uniform boundedness of $|u_m|$. The case $k\geqs n/2$ is completely the same. We only need to verify when $k<n/2$,
\begin{align}
|z|^{-(k^*-1)}f_m(x,z)\to0\quad\text{as }z\to-\infty,\text{ uniformly for }m\in\mathbb{N}.
\label{formu522}
\end{align}
Indeed, for any $\varepsilon>0$ there exists a constant $M^*>0$ such that $|z|^{-(k^*-1)}f(x,z)\leqs\varepsilon$ for $z<-M^*$. Then for $m>M^*$, we consider two cases separately. For $-m<z<-M^*$, $f_m(x,z)=f(x,z)$ and thus $|z|^{-(k^*-1)}f_m(x,z)\leqs\varepsilon$ holds. For $z\leqs-m$,
\begin{align*}
|z|^{-(k^*-1)}f_m(x,z)&=|z|^{-(k^*-1)}m^{-p}f(x,-m)|z|^p  \\
&\leqs\left(\frac{m}{|z|}\right)^{k^*-1-p}m^{-(k^*-1)}f(x,-m)\leqs\varepsilon,
\end{align*}
by our assumption $p<k^*-1$. We actually prove \eqref{formu522}.

By Theorem \ref{thm14}, there exists a subsequence of $\{u_m\}$ converging to a solution $u\in\Upsilon(\Omega)$ of the problem \eqref{formu17}, which satisfies $J_0(u)=c_0>0$. We finally complete the proof of Theorem \ref{thm12}.    \hfill $\square$

\section{Nonexistence Results}
In this section, we utilize the idea of \cite{BC98nonexistence} to prove Theorem \ref{thm15}.
\\[-0.5em]

\textbf{Proof of Theorem \ref{thm15}.}
Assume that $B_{2\eta}(0)\subset\Omega$ for some $0<\eta<1$. Suppose on the contrary that there exists a nonzero viscosity subsolution $u\in C^0(\overline{\Omega})\cap\Phi_0^k(\Omega)$ to \eqref{formu119}. Then $u$ is subharmonic and achieves its maximum $u=0$ on the boundary $\partial\Omega$. By strong maximum principle, we have $u(B_{\eta})\leqs-\varepsilon_0$ for some $\varepsilon_0>0$. Set
\begin{align*}
\phi(z)=\int_{-\varepsilon_0}^zf(t)^{-1/k}dt,
\end{align*}
for $z\leqs-\varepsilon_0$. Then we have
\begin{gather}
\phi'(z)=f(z)^{-1/k}>0, \label{formu61} \\
\phi''(z)=-\frac1kf(z)^{-1-1/k}f'(z)\geqs0,  \nonumber
\end{gather}
since $f(z)>0$, $f'(z)\leqs0$ for $z<0$. Note that $\phi(u)\leqs0$ in $B_{\eta}$ and $\phi(u)>-\infty$ by \eqref{formu118}. 

Denote $\tilde u=\phi(u)$. We claim that $\tilde u$ is a viscosity subsolution of
\begin{equation}
\left\{
\begin{array}{ll}
S_k(D^2w)=|x|^{2sk} & \text{in }B_{\eta}(0), \\
w=0 & \text{on }\partial B_\eta(0).
\end{array}
\right.
\label{formu62}
\end{equation}
Indeed, consider arbitrary function $\tilde v\in C^2(\mathcal{N)}$ for any open $\mathcal{N}\subset B_\eta(0)$. Assume that $\tilde v-\tilde u$ attains its local maximum at $x_0\in\mathcal{N}$. Without loss of generality, we can also assume $\tilde v(x_0)=\tilde u(x_0)\leqs0$ and $\tilde v\leqs\tilde u$ in $\mathcal{N}$. Since $\phi$ is a strictly increasing function of $C^2$, we have $v:=\phi^{-1}(\tilde v)\in C^2(\mathcal{N})$ satisfies $v(x_0)=u(x_0)\leqs-\varepsilon_0$ and $v\leqs u$ in $\mathcal{N}$. Since $u$ is a viscosity subsolution of \eqref{formu119}, we obtain (see \cite{Urbas90viscosity})
\begin{align*}
S_k(D^2v)\geqs |x|^{2sk}f(u)\quad \text{at }x=x_0.
\end{align*}
On the other side, by direct calculation,
\begin{align*}
\frac{\partial^2\tilde v}{\partial x_i\partial x_j}=\phi''(v)\frac{\partial v}{\partial x_i}\frac{\partial v}{\partial x_j}+\phi'(v)\frac{\partial^2v}{\partial x_i\partial x_j},
\end{align*}
and hence by $\phi''\geqs0$, it holds at $x=x_0$
\begin{align*}
S_k(D^2\tilde v)\geqs[\phi'(v)]^kS_k(D^2v)\geqs[\phi'(v)]^k|x|^{2sk}f(u)=|x|^{2sk},
\end{align*}
where the last equality follows by \eqref{formu61} and $v(x_0)=u(x_0)$. By the arbitrariness of $\tilde v$, we deduce that $\tilde u$ is a subsolution of \eqref{formu62}.

Next, let $w$ be the admissible solution of the Dirichlet problem
\begin{equation}
\left\{
\begin{array}{ll}
  S_k(D^2w)=(|x|^2+\delta^2)^{-k}&\text{in }B_{\eta}(0),\\
  w=0&\text{on }\partial B_{\eta}(0),
\end{array}
\right.
\label{formu63}
\end{equation}
where $\delta>0$ is a small constant. Since $\eta<1$ and $s\leqs-1$, we can obtain $w\geqs\tilde u$ by comparison principle. By moving plane method \cite{GNN79symmetry}, we derive that $w$ is radial symmetric. Thus, \eqref{formu63} falls into an ODE problem:
\begin{equation*}
\left\{
\begin{array}{l}
\partial_r(r^{n-k}[w'(r)]^k)=C_{n,k}r^{n-1}(r^2+\delta^2)^{-k},\\
 w'(0)=0, \quad  w(\eta)=0.
\end{array}
\right.
\end{equation*}
Integrating the equation from $0$ to $\rho\in(0,\eta)$, we obtain
\begin{align*}
\rho^{n-k}[w'(\rho)]^k=C_{n,k}\int_0^\rho\frac{r^{n-1}}{(r^2+\delta^2)^k}dr.
\end{align*}
Then for $\rho\geqs2\delta$, we have
\begin{align*}
\rho^{n-k}[w'(\rho)]^k\geqs C_{n,k}\int_{\delta}^\rho\frac{r^{n-1}}{(r^2+\delta^2)^k}dr\geqs C'_{n,k}(\rho^{n-2k}-\delta^{n-2k})\geqs c_{n,k}\rho^{n-2k}.
\end{align*}
Thus, $w'(\rho)\geqs c_{n,k}\rho^{-1}$ for $\rho\geqs2\delta$. Note that $w(\eta)=0$, then integrating from $2\delta$ to $\eta$ yields
\begin{align*}
w(2\delta)\leqs c_{n,k}(\log2\delta-\log\eta).  
\end{align*}
Since $\tilde u\leqs w$, we reach a contradiction to the boundedness of $\tilde u$ when $\delta$ is taken sufficiently small. This finishes the proof.            \hfill $\square$

\small
~\\
\noindent\textbf{Data Availability } No datasets were generated or analyzed during the current study.
\\[0.5em]
\noindent\textbf{Conflict of interest } On behalf of all authors, the corresponding author declares that there is no conflict of interest.

\bibliographystyle{plain}
\bibliography{cite}

\medskip\medskip
{\em Address and E-mail:}
\medskip\medskip

{\em Rongxun He}

{\em School of Mathematical Sciences, Fudan University}

{\em rxhe24@m.fudan.edu.cn}

\medskip\medskip

{\em Wei Ke}

{\em School of Mathematics, Southwest Minzu University}

{\em wke25@swun.edu.cn}

\end{document}